\theoremstyle{plain}
\newtheorem{theorem}{Theorem}[section]
\newtheorem{lemma}[theorem]{Lemma}
\newtheorem{corollary}[theorem]{Corollary}
\theoremstyle{definition}
\newtheorem{definition}[theorem]{Definition}
\theoremstyle{remark}
\newtheorem*{remark}{Remark}
\newcommand{\R}{\mathbb{R}}
\newcommand{\N}{\mathbb{N}}
\newcommand{\Z}{\mathbb{Z}}
\newcommand{\cF}{\mathcal{F}}
\newcommand{\cB}{\mathcal{B}}
\newcommand{\cD}{\mathcal{D}}
\newcommand{\cE}{\mathcal{E}}
\newcommand{\cX}{\mathcal{X}}
\newcommand{\cA}{\mathcal{A}}
\newcommand{\ep}{\varepsilon}
\newcommand{\lf}{\left\lfloor}
\newcommand{\rf}{\right\rfloor}
\newcommand{\mi}{\wedge}
\newcommand{\conv}{\leadsto}
\newcommand{\ph}{\varphi}
\newcommand{\si}{\sigma}
\begin{document}

\title{A weak convergence result for sequential empirical processes under weak dependence}

\author{Maria Mohr}
\affil{Department of Mathematics \\ University of Hamburg \\ Bundesstr. 55, 20146 Hamburg, Germany \\ maria.mohr@uni-hamburg.de}

\maketitle

\begin{abstract}
The purpose of this paper is to prove a weak convergence result for empirical processes indexed in general classes of functions and with an underlying $\alpha$-mixing triangular array of random variables. In particular, the uniformly boundedness assumption on the function class, which is required in most of the existing literature, is spared. Furthermore, under strict stationarity a weak convergence result for the sequential empirical process indexed in function classes is obtained as a direct consequence. Two examples in mathematical statistics, that cannot be treated with existing results, are given as possible applications.
\end{abstract}

\noindent\text{AMS 2010 Classification:} Primary 60F05, Secondary 60F17\\
\noindent\text{Keywords:} sequential empirical processes, functional central limit theorems, stochastic equicontinuity, $\alpha$-mixing


\section{Introduction}\label{Introduction}

The asymptotic behavior of empirical processes has been studied for decades. Inspired by the study of the empirical distribution function, more generally empirical processes indexed in function classes gained a lot of attention. In particular, central limit results, i.e.~weak convergence of the sequence of the stochastic processes to a Gaussian process, are of interest. Such results are sometimes referred to as a uniform central limit theorem (CLT) for the empirical process indexed in function classes and as a uniform functional central limit theorem (FCLT) for the partial sum process indexed in function classes, also known as the sequential empirical process indexed in function classes.

The most simple case is given if the underlying process is a family of i.i.d.~random variables. In this situation many results are available. Ossiander \cite{Ossiander1987897} showed a uniform CLT under a metric entropy condition on the function class. The uniform FCLT follows directly by the non-functional one. For example van der Vaart \& Wellner \cite{vanderVaart1996} state this result in Section 2.12 of their book and also give a great overview on empirical processes for the i.i.d.~case in general. For dependent data much less is known. There are several results concerning the non-sequential case. Doukhan, Massart \& Rio \cite{Doukhan1996393} showed a uniform CLT under a metric entropy condition on the function class and $\beta$-mixing, strictly stationary data. Dedecker \& Louhichi \cite{Dedecker2002137} generalized this result, imposing a condition on suitable maximal inequalities for the empirical process indexed in finite sets of functions. Their result is applicable to $\beta$-mixing and non-uniform $\phi$-mixing sequences. Andrews \& Pollard \cite{Andrews1994549} showed a uniform CLT for $\alpha$-mixing arrays and uniformly bounded function classes, satisfying a metric entropy condition. Massart \cite{Massart1987} showed a uniform CLT for uniformly bounded function classes and strictly stationary, $\alpha$-mixing sequences, when the mixing coefficient decays exponentially fast. Given uniformly bounded function classes, Hariz \cite{Hariz2005339} gave more general conditions in terms of bracketing numbers with respect to a norm resulting from a moment inequality satisfied by the underlying process. He particularly improves among others the results in \cite{Massart1987} and \cite{Andrews1994549}. Hansen \cite{Hansen1996347} proved a uniform CLT for mixingale arrays and classes of Lipschitz-continuous functions. More recent results use alternative dependence conditions. Hagemann \cite{Hagemann2014188} uses an alternative short-range dependence condition, applicable to non-linear time series models, and uniformly bounded classes of functions. Dehling, Durieu \& Tusche \cite{Dehling20141372} showed a uniform CLT for multiple mixing and strictly stationary data, and uniformly bounded function classes. In the dependent setup the convergence of the sequential process does not follow directly by the convergence of the non-sequential one, but requires additional conditions. Dehling, Durieu \& Tusche \cite{Dehling201487} extended their aforementioned uniform CLT to a functional version. Volgushev \& Shao \cite{Volgushev2014390} established more general assumptions, in terms of a strong version of asymptotic equicontinuity for the non-sequential process, under which a uniform FCLT holds, for strictly stationary data.

An intensive study of the literature led to two main findings. First, most uniform central limit results for dependent data impose the condition of uniformly bounded classes of functions or strong smoothness conditions. And secondly, very few results are available regarding the uniform FCLT. The aim of this paper is to prove a uniform CLT for empirical processes with an $\alpha$-mixing underlying triangular array and indexed by a function class, that satisfies a metric entropy condition. It is a generalization of the result of Andrews \& Pollard \cite{Andrews1994549} to unbounded function classes. The result particularly implies the strong version of asymptotic equicontinuity, needed in \cite{Volgushev2014390}. In the case of strict stationarity, a uniform FCLT can therefore be obtained simultaneously.

The remainder of this paper is organized as follows. In Section \ref{Uniform CLT} some definitions are recalled and the main results are displayed. Section \ref{Applications} contains two applications of the results. All proofs can be found in Section \ref{Proofs}.


\section{A uniform CLT and FCLT}\label{Uniform CLT}

In this section, the concept of weak convergence of stochastic processes in some metric space is recalled and the empirical process is defined. Furthermore, the notions of strongly mixing and bracketing numbers are presented. The main result is given in Theorem \ref{conv}. The uniform CLT and uniform FCLT are stated in Corollary \ref{cor1} and Corollary \ref{cor2} respectively. 


\subsection{Definitions and notations}

Let $T$ be an arbitrary set and let $l^{\infty}(T)$ be the set of all uniformly bounded real-valued functions on $T$. Following the modern empirical process theory, well summarized in \cite{vanderVaart1996}, this set is equipped with the supremum norm and the corresponding Borel $\sigma$-algebra. 

A stochastic process $Z=\{Z(t):t\in T\}$, defined on some underlying probability space, can be viewed as a random element in $l^{\infty}(T)$ if all sample paths are bounded.

\begin{definition}[Weak convergence]
Let $Z=\{Z(t):t\in T\}$ and $Z_n=\{Z_n(t):t\in T\}, n\in\N$, be stochastic processes in $l^{\infty}(T)$ and let $Z$ be measurable with respect to the Borel $\sigma$-algebra. Then $Z_n$ is said to \textit{converge weakly} to $Z$, denoted by $Z_n\conv Z$, if 
\[E^*[H(Z_n)]\to E[H(Z)],\]
for all bounded and continuous functions $H:l^{\infty}(T)\to\R$, where $E^*[X]$ denotes the outer expectation of a possibly non-measurable real valued mapping $X$.
\end{definition}

As it can be seen for example by applying Theorem 1.5.7 and Theorem 1.5.4 in \cite{vanderVaart1996}, it holds that $Z_n$ converges weakly to $Z$ in $l^{\infty}(T)$, if and only if the following two conditions hold
\begin{enumerate}
\item[$\bullet$] \textit{fidi convergence:} for all $K\in\N$ and all $t_1,\dots,t_K\in T$ 
\[(Z_n(t_k))_{k=1,\dots,K}\overset{\cD}{\to} (Z(t_k))_{k=1,\dots,K},\]
\item[$\bullet$] there exists a semi metric $d$ on $T$, such that $(T,d)$ is totally bounded and $Z_n$ is \textit{asymptotic equicontinuous}, i.e.
\[\lim\limits_{\delta\searrow 0}\limsup\limits_{n\to\infty}P^*\left(\sup\limits_{\{t_1,t_2\in T: d(t_1,t_2)<\delta\}}\left|Z_n(t_1)-Z_n(t_2)\right|>\epsilon\right)=0,\]
for all $\epsilon>0$, where $P^*(A)$ denotes the outer probability of a possibly non-measurable set $A$.
\end{enumerate}

These two conditions are in many situations easier to verify.

\begin{definition}[Empirical process]\label{empirical}
Let $\{X_{n,t}:1\le t\le n,n\in\N\}$ be a triangular array of random variables with values in some measure space $\cX$. For some measurable function $\ph:\cX\to\R$ and some $s\in[0,1]$, let
\[G_n(s,\ph):=\frac{1}{\sqrt{n}}\sum\limits_{i=1}^{\lf ns \rf}\left(\ph(X_{n,i})-E[\ph(X_{n,i})]\right), n\in\N.\]
For some function class $\cF$ of measurable functions $\cX\to\R$ the \textit{(non-sequential) empirical process} indexed by $\cF$ is defined as $\left\{G_n(1,\ph):\ph\in\cF\right\}, n\in\N$, and can be viewed as a sequence of random elements in $l^{\infty}(\cF)$. The \textit{sequential empirical process} indexed by $[0,1]\times\cF$ is defined as $\left\{G_n(s,\ph):s\in[0,1],\ph\in\cF\right\}, n\in\N$, and can be viewed as a sequence of random elements in $l^{\infty}([0,1]\times\cF)$.
\end{definition}

For the empirical process to converge weakly to a centered Gaussian process, assumptions on the underlying triangular array process $\{X_{n,t}:1\le t\le n,n\in\N\}$ and on the function class $\cF$ are needed. The main result will be shown for an underlying family of strongly mixing random variables and function classes, satisfying a metric entropy condition in terms of the bracketing notion. The following definition can for example be found in \cite{Su2013187}.

\begin{definition}[Strongly mixing]
For some triangular array of random variables $\{X_{n,t}:1\le t\le n,n\in\N\}$ define 
\begin{align*}
\alpha_n(t):=\begin{cases} \sup\limits_{1\le k \le n-t}\sup\limits_{\substack{A\in\sigma(X_{n,j}:1\le j\le k)\\ B\in\sigma(X_{n,j}:k+t\le j\le n)}}|P(A\cap B)-P(A)\cdot P(B)|, \ &t\le n-1 
\\ 0, &t\ge n \end{cases}
\end{align*}
and 
\begin{align*}
\alpha(t):=\begin{cases} \sup\limits_{n\in\N}\alpha_n(t), \ &t\in\N \\ 1, &t=0 \end{cases}.
\end{align*}

Then $\{X_{n,t}:1\le t\le n,n\in\N\}$ is called \textit{strongly mixing} or $\alpha$-\textit{mixing} if $\alpha(t)\to 0$ as $t\to\infty$, and $\alpha(\cdot)$ is called the mixing coefficient.
\end{definition}

For a strictly stationary sequence of random variables $\{X_t:t\in\Z\}$ the strongly mixing notion simplifies to
%
%
\begin{align*}
\alpha(t):=\sup\limits_{\substack{A\in\sigma(X_j:j\le 0)\\ B\in\sigma(X_j:j\ge t)}}|P(A\cap B)-P(A)\cdot P(B)|\to 0, t\to\infty,
\end{align*}

which can for example be found in \cite{Bradley1985165}. The following definition is based on Definition 2.1 of Andrews \& Pollard \cite{Andrews1994549}, but uses a slightly different notation.

\begin{definition}[Bracketing number]\label{brackets}
Let $\cX$ be a measure space, $\cF$ some class of functions $\cX\to\R$ and $\rho$ some semi-norm on $\cF$. For all $\ep>0$, let $N=N(\ep)$, be the smallest integer, for which there exist a class of functions $\cX\to\R$, denoted by $\cB$ and called \textit{bounding class}, and a function class $\cA \subset \cF$, called \textit{approximating class}, such that
\[|\cB|=|\cA|=N,\]
\[\rho(b)<\ep,\ \forall \ b\in\cB\]
and for all $\ph\in\cF$ there exist an $a^*\in\cA$ and a $b^*\in\cB$, such that
\[|\ph-a^*|\le b^*.\]
Then $N(\ep)$ is called the \textit{bracketing number} and denoted by $N_{[~]}(\ep,\cF,\rho)$. The function $a^*$ is referred to as the \textit{(to $\ph$) corresponding approximating function} and the function $b^*$ as the \textit{(to $\ph$) corresponding bounding function}. 
\end{definition}


\subsection{Main results}

Theorem \ref{conv} gives conditions on the underlying array process $\{X_{n,t}:1\le t\le n,n\in\N\}$ and on the function class $\cF$, under which the empirical process $\{G_n(1,\ph):\ph\in\cF\}$ satisfied a strong form of asymptotic equicontinuity.

\begin{theorem}[Equicontinuity] \label{conv}
Let $\{X_{n,t}:1\le t\le n,n\in\N\}$ be a triangular array of random variables with values in some measure space $\cX$. Let $\cF$ be a class of measurable functions $\cX\to\R$. Let furthermore the following assumptions hold.
\begin{enumerate}
\item[\textbf{(A1)}] Let $\{X_{n,t}:1\le t\le n,n\in\N\}$ be strongly mixing with mixing coefficient $\alpha(\cdot)$, such that
\begin{align*}
\sum\limits_{t=1}^{\infty}t^{Q-2}\alpha(t)^{\frac{\gamma}{2+\gamma}}<\infty,
\end{align*}
for some $\gamma>0$ and some even $Q\ge 2$. 
\item[\textbf{(A2)}] For $Q$ and $\gamma$ from assumption \textbf{(A1)} and 
\[\rho(\ph):=\sup\limits_{n\in\N}\sup_{1\le t\le n}E[|\ph(X_{n,t})|^2]^{\frac{1}{2}},\] 
for all measurable functions $\ph:\cX\to\R$, let
\[\int\limits_{0}^{1}x^{-\frac{\gamma}{2+\gamma}}\left(N_{[~]}(x,\cF,\rho)\right)^{\frac{1}{Q}}dx<\infty.\]
Furthermore, assume that each $\ep>0$ allows for a choice of bounding class $\cB$, such that for all $i=2,\dots,Q$
\begin{align}
\sup\limits_{n\in\N}\sup_{1\le t\le n}E\left[|b(X_{n,t})|^{i\frac{2+\gamma}{2}}\right]^{\frac{1}{2}}\le\ep, \ \forall \ b\in\cB. \label{eq:bounding function}
\end{align}
\end{enumerate}
Then with $d(\ph,\psi):=\sup\limits_{n\in\N}\sup\limits_{1\le t\le n}E\left[|\ph(X_{n,t})-\psi(X_{n,t})|^{Q\frac{2+\gamma}{2}}\right]^{\frac{1}{Q}\frac{2}{2+\gamma}}$, it holds that
\begin{align*}
\lim\limits_{\delta\searrow 0}\limsup\limits_{n\to\infty}E^*\left[\sup\limits_{\{\ph,\psi \in\cF:d(\ph,\psi)<\delta\}}\left|G_n(1,\ph)-G_n(1,\psi)\right|^Q\right]^{\frac{1}{Q}}=0.
\end{align*}
\end{theorem}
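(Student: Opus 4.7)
The plan is to carry out a bracketing-chaining argument in the spirit of Andrews \& Pollard \cite{Andrews1994549}, replacing Bernstein's inequality---which demands uniform boundedness---by a Rosenthal/Yokoyama-type moment inequality for $\alpha$-mixing triangular arrays. The higher-moment condition \eqref{eq:bounding function} will play the role that uniform boundedness plays in the classical argument.

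First, I would fix a geometric grid $\delta_k = 2^{-k}\delta$ for $k = 0, 1, \ldots, K$, with $K$ sent to infinity at the end. For each $k$, assumption (A2) supplies a bracketing pair $(\cA_k, \cB_k)$ of cardinality $N_k := N_{[~]}(\delta_k, \cF, \rho)$ together with maps $\ph \mapsto a_k(\ph) \in \cA_k$ and $\ph \mapsto b_k(\ph) \in \cB_k$ satisfying $|\ph - a_k(\ph)| \le b_k(\ph)$, $\rho(b_k(\ph)) < \delta_k$, and \eqref{eq:bounding function}. Telescoping yields
\[
G_n(1,\ph) - G_n(1,\psi) = \bigl[G_n(1,a_0(\ph)) - G_n(1,a_0(\psi))\bigr] + \sum_{k=1}^{K}\bigl(\Delta_k^\ph - \Delta_k^\psi\bigr) + R_n(\ph,\psi),
\]
with chain links $\Delta_k^\ph := G_n(1,a_k(\ph)) - G_n(1,a_{k-1}(\ph))$ and a remainder $R_n(\ph,\psi)$ gathering $G_n(1,\ph) - G_n(1,a_K(\ph))$ and its analogue for $\psi$. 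The hypothesis $d(\ph,\psi) < \delta$ is used to make the coarsest-level term vanish in the limit $\delta \searrow 0$ (after a union bound over $N_0^2$ pairs and a fidi-type estimate).

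Next, to each chain link I would apply a Rosenthal-type inequality for $\alpha$-mixing arrays: its hypotheses---finiteness of moments of orders $i(2+\gamma)/2$ for $i = 2, \ldots, Q$ on the summands, and a mixing-weighted series that is finite by (A1)---are met by applying \eqref{eq:bounding function} to the bounding function that dominates $|a_k(\ph) - a_{k-1}(\ph)|$. This yields $E[|G_n(1,a) - G_n(1,a')|^Q] \le C\,\rho(a-a')^Q$ for a constant $C$ depending only on $Q$, $\gamma$ and the sum in (A1). A union bound over the at most $N_k N_{k-1}$ relevant index pairs gives an $L^Q$-bound of order $\delta_{k-1}(N_k N_{k-1})^{1/Q}$ at level $k$. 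Summing in $k$ is a Riemann sum dominated by the bracketing integral of (A2), hence finite uniformly in $n$ and vanishing as $\delta \searrow 0$. The remainder $R_n(\ph,\psi)$ is controlled by the same inequality applied to $b_K$, the required scaling being supplied directly by \eqref{eq:bounding function} and tending to zero as $K \to \infty$.

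The main obstacle is establishing---or quoting---the Rosenthal-type $Q$-th moment inequality with the right dependence on $\rho$ under the weak mixing assumption (A1). The delicate point is that \eqref{eq:bounding function} must be exploited not only at $i=2$ (variance-type) but at every $i \le Q$, because the higher-order cumulant terms in the expansion of $E[|\sum_i Y_i|^Q]$ for $\alpha$-mixing arrays involve moments of all even orders up to $Q$. Getting the $\delta_k$-scaling of these higher moments to match the bracketing integral cleanly, so that the chain telescopes and the union bounds do not destroy the entropy integral, is the technical heart of the proof, and is precisely what the combination of (A1), (A2) and \eqref{eq:bounding function} is engineered to permit.
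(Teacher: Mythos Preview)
Your proposal correctly identifies the two pillars of the argument---a bracketing-chaining scheme and a Rosenthal/Yokoyama-type $Q$-th moment inequality for $\alpha$-mixing arrays (this is exactly the paper's Lemma~\ref{moment inequality})---and the overall architecture matches the paper. However, there is a genuine gap in your handling of the remainder.

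The remainder $R_n(\ph,\psi)$ contains $G_n(1,\ph)-G_n(1,a_K(\ph))$. Bounding $|\ph-a_K(\ph)|\le b_K(\ph)$ and passing to the centered process forces an uncentered bias term:
\[
\bigl|G_n(1,\ph)-G_n(1,a_K(\ph))\bigr|\;\le\;G_n(1,b_K(\ph))+\frac{2}{\sqrt{n}}\sum_{i=1}^{n}E\bigl[b_K(\ph)(X_{n,i})\bigr]\;\le\;G_n(1,b_K(\ph))+2\sqrt{n}\,\delta_K.
\]
The term $2\sqrt{n}\,\delta_K$ diverges as $n\to\infty$ for every fixed $K$, so ``sending $K$ to infinity at the end'' fails: the order of limits is wrong, and the moment inequality applied to $b_K$ does not see this bias. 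The paper repairs this by letting the finest chaining level depend on $n$: it takes $k(n)$ to be the largest $k$ with $\tau_k:=\delta_k^{2/(2+\gamma)}\ge n^{-1/2}$, which forces $\sqrt{n}\,\delta_{k(n)}\to 0$ while simultaneously $N_{k(n)}^{1/Q}\tau_{k(n)}\to 0$ by the tail of the entropy integral. The chain then runs from $a_{k(n)}^*$ \emph{down} to a fixed coarse level $a_m^{(\ph)}$ with $m=m(\ep)$, and a final partition of $\cF$ into $N_m$ equivalence classes (your ``coarsest-level'' step) is carried out separately via the moment inequality on near-minimizing pairs between classes. Without the $n$-dependent truncation your remainder is not under control.

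Two smaller points. First, your union bound over $N_kN_{k-1}$ pairs at level $k$ is looser than needed and would cost an extra factor $N_k^{1/Q}$ in the chaining sum; the paper avoids this by choosing $a_{k-1}$ as a deterministic function of $a_k$ (the minimizer in \eqref{eq:minimizer}), so only $N_k$ links appear at level $k$. Second, the moment inequality does not give $E[|G_n(1,a-a')|^Q]\le C\rho(a-a')^Q$; it gives $E[|G_n(1,a-a')|^Q]^{1/Q}\le C\max(n^{-1/2},\tau)$ with $\tau^{2+\gamma}$ dominating the $i(2+\gamma)/2$-moments for $i=2,\dots,Q$. The correct scale on the $k$-th link is therefore $\tau_{k-1}=\delta_{k-1}^{2/(2+\gamma)}$, and it is precisely this exponent that turns the chaining sum $\sum_k N_k^{1/Q}\tau_{k-1}$ into a Riemann sum for $\int_0^{\delta} x^{-\gamma/(2+\gamma)}N_{[~]}(x,\cF,\rho)^{1/Q}\,dx$.
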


The proof is given in Section \ref{Proofs}. A uniform CLT as direct consequence is obtained and stated in Corollary \ref{cor1}. In contrast to most results for strongly mixing sequences in the literature, it does not require uniformly bounded function classes.

\begin{remark}
Note that assumption \textbf{(A1)} and the first part of assumption \textbf{(A2)} require a trade off between the rate of decay in the mixing coefficients and the rate of growth in the bracketing numbers. For instance if $N_{[~]}(x,\cF,\rho)=O(x^{-d})$ for $x\to 0$ and $\alpha(t)=O(t^{-\beta})$ for $t\to \infty$, for some $d,\beta>0$, then $Q$ and $\gamma$ need to be chosen such that $Q>d(\gamma/2+1)$ and $\beta>(Q-1)(2/\gamma+1)$. The assumptions are closely related to the ones made by Andrews \& Pollard \cite{Andrews1994549} with \textbf{(A1)} being slightly less restrictive than the corresponding one in \cite{Andrews1994549}.
\end{remark}

\begin{corollary}[Uniform CLT]\label{cor1}
Let the assumptions of Theorem \ref{conv} hold and let additionally for all $K\in\N$ and $\ph_k\in\cF$, $k=1,\dots,K$
\[\left(G_n(1,\ph_k)\right)_{k=1,\dots,K}\overset{\cD}{\longrightarrow} \left(G(1,\ph_k)\right)_{k=1,\dots,K},\]
where $G:=\{G(1,\ph):\ph\in\cF\}$ is a centered Gaussian process. Then
\begin{align*}
\left\{G_n(1,\ph):\ph\in\cF\right\}\conv G,
\end{align*}
in $l^{\infty}(\cF)$.
\end{corollary}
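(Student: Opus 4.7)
The plan is to invoke the characterization of weak convergence in $l^\infty(\cF)$ recalled immediately after the definition of weak convergence: it suffices to establish fidi convergence together with asymptotic equicontinuity with respect to a semi-metric $d$ that makes $(\cF,d)$ totally bounded. Since the fidi convergence is precisely the additional hypothesis of the corollary, only the topological piece is left to supply.

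For asymptotic equicontinuity I would take the semi-metric $d$ already built into Theorem \ref{conv}. Markov's inequality applied to the $Q$-th power gives, for every $\ep>0$,
\[
P^*\!\!\left(\sup_{\{\varphi,\psi\in\cF:\,d(\varphi,\psi)<\delta\}}\!\!|G_n(1,\varphi)-G_n(1,\psi)|>\ep\right)\le \ep^{-Q}\,E^*\!\!\left[\sup_{\{d(\varphi,\psi)<\delta\}}\!\!|G_n(1,\varphi)-G_n(1,\psi)|^Q\right],
\]
so that taking $\limsup_n$ first and then $\delta\searrow 0$, together with Theorem \ref{conv}, yields the required asymptotic equicontinuity in outer probability.

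It remains to show that $(\cF,d)$ is totally bounded. Given $\delta>0$, the integrability part of (A2) forces $N_{[~]}(\ep,\cF,\rho)<\infty$ for every $\ep>0$, and the bounding-function part of (A2) supplies, for any such $\ep$, an approximating class $\cA\subset\cF$ and a bounding class $\cB$ of the same finite cardinality with $\cB$ satisfying \eqref{eq:bounding function}. For every $\varphi\in\cF$ there are then $a^*\in\cA$ and $b^*\in\cB$ with $|\varphi-a^*|\le b^*$, and specializing \eqref{eq:bounding function} to $i=Q$ (noting that $(1/Q)\cdot 2/(2+\gamma)$ is exactly the reciprocal of the exponent $Q(2+\gamma)/2$ appearing in $d$) gives
\[
d(\varphi,a^*)\le \sup_n\sup_t E\bigl[|b^*(X_{n,t})|^{Q(2+\gamma)/2}\bigr]^{(1/Q)\cdot 2/(2+\gamma)}\le \ep^{4/(Q(2+\gamma))},
\]
which is $<\delta$ once $\ep$ is taken small enough, so $\cA$ is a finite $\delta$-net in $(\cF,d)$. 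I do not expect any real obstacle: the corollary is a direct reduction of Theorem \ref{conv} to the standard weak-convergence characterization, and the only bookkeeping is the exponent-matching between $d$ and the moment bound \eqref{eq:bounding function}.
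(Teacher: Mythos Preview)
Your proof is correct and follows the same route as the paper, which records only that the corollary ``is a direct consequence of Theorem~\ref{conv} and Markov's inequality.'' You have additionally spelled out the total boundedness of $(\cF,d)$ via the bracketing and bounding-class assumptions, which the paper leaves implicit.
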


The proof is given in Section \ref{Proofs}. Note that Corollary \ref{cor1} does not require stationarity. If additionally strict stationarity is assumed, a uniform FCLT can be obtained, which is stated in the following.

\begin{corollary}[Uniform FCLT]\label{cor2}
Let $\{X_t:t\in\Z\}$ be a strictly stationary sequence of random variables. Let the assumptions of Theorem \ref{conv} be satisfied by $X_{n,t}:=X_t$, for all $1\le t\le n,n\in\N$ and by $Q> 2$. Additionally, let
\begin{enumerate}
\item[\textbf{(A3)}] $\cF$ possess an envelope function $F$, with $E[|F(X_1)|^Q]<\infty$ and let there exist a constant $M<\infty$, such that
\[\sup\limits_{\ph\in\cF}E\left[ |\ph(X_1)|^{Q\frac{2+\gamma}{2}}\right]\le M.\]
\end{enumerate}
Furthermore, let for all $K\in\N$ and all finite collections $\ph_k\in\cF$, $s_k\in[0,1]$, $k=1,\dots,K$,
\[\left(G_n(s_k,\ph_k)\right)_{k=1,\dots,K}\overset{\cD}{\longrightarrow} \left(G(s_k,\ph_k)\right)_{k=1,\dots,K},\]
where $G:=\{G(s,\ph):s\in[0,1],\ph\in\cF\}$ is a centered Gaussian process. Then
\begin{align*}
\left\{G_n(s,\ph):s\in[0,1],\ph\in\cF\right\}\conv G,
\end{align*}
in $l^{\infty}([0,1]\times \cF)$.
\end{corollary}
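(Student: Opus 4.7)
The plan is to verify the two characterizations of weak convergence in $l^{\infty}([0,1]\times\cF)$: finite-dimensional convergence (supplied by hypothesis) and asymptotic equicontinuity with respect to a totally bounded semi-metric. On the index set I would work with
$$\tilde d((s_1,\ph_1),(s_2,\ph_2)) := |s_1-s_2|^{1/2} \vee d(\ph_1,\ph_2),$$
where $d$ is the semi-metric from Theorem \ref{conv}. Total boundedness is immediate: $(\cF,d)$ is totally bounded by virtue of the bracketing integral in (A2), and $[0,1]$ trivially so.

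By the triangle inequality,
$$|G_n(s_1,\ph_1) - G_n(s_2,\ph_2)| \le |G_n(s_1,\ph_1) - G_n(s_1,\ph_2)| + |G_n(s_1,\ph_2) - G_n(s_2,\ph_2)|,$$
which I would handle in two separate pieces. For the second piece (fixed $\ph$, varying $s$), strict stationarity identifies $G_n(s_2,\ph) - G_n(s_1,\ph)$ in law with a normalized partial sum of length $\lfloor ns_2\rfloor - \lfloor ns_1\rfloor$ of the centred variables $\ph(X_i) - E[\ph(X_i)]$. An $\alpha$-mixing moment inequality of Rio or Doukhan--Louhichi type, powered by the mixing rate in (A1) and the uniform moment bound on $\ph\in\cF$ from (A3), yields
$$\sup_{\ph\in\cF} E\bigl[|G_n(s_2,\ph) - G_n(s_1,\ph)|^Q\bigr] \le C |s_2-s_1|^{Q/2}$$
uniformly in $n$. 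Since $Q>2$, a standard chaining (Billingsley-type tightness) argument on $[0,1]$ converts this into the desired vanishing equicontinuity modulus in $s$ uniformly over $\ph\in\cF$.

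For the first piece (fixed $s$, varying $\ph$) the goal is to transfer the $L^Q$ equicontinuity of Theorem \ref{conv} from $s=1$ to uniform control over $s\in[0,1]$. I would apply a M\'oricz--Serfling-type maximal inequality for partial sums of the $\alpha$-mixing sequence to the centred functions $(\ph-\psi)(X_i)-E[(\ph-\psi)(X_i)]$, bounding $\sup_{s\in[0,1]}|G_n(s,\ph)-G_n(s,\psi)|^Q$ by a constant multiple of $|G_n(1,\ph)-G_n(1,\psi)|^Q$, to which Theorem \ref{conv} then applies directly. An alternative is to rerun the chaining proof of Theorem \ref{conv} itself with the supremum over $s$ inserted inside the moment, using that (A2) and (A3) furnish the same $L^{Q(2+\gamma)/2}$ bounds uniformly in $i$.

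The main obstacle is the first piece: pulling the supremum over $s$ inside an $L^Q$ norm while preserving the sharp modulus delivered by Theorem \ref{conv}. This is exactly the ``strong asymptotic equicontinuity'' framework of Volgushev \& Shao \cite{Volgushev2014390}, and the cleanest finish is to observe that Theorem \ref{conv} together with (A3) verifies their hypotheses and then quote their uniform FCLT to combine fidi convergence, total boundedness, and the two equicontinuity pieces into the stated weak convergence in $l^{\infty}([0,1]\times\cF)$.
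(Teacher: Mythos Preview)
Your final paragraph lands on exactly the paper's proof: the author does not attempt any direct splitting of the equicontinuity into an $s$-piece and a $\ph$-piece, but simply invokes Theorem~4.10 of Volgushev \& Shao \cite{Volgushev2014390}. The required inputs there are (i) the strong $L^Q$-equicontinuity of $G_n(1,\cdot)$, supplied by Theorem~\ref{conv}; (ii) a uniform bound $\sup_n\sup_{\ph\in\cF}E[|G_n(1,\ph)|^Q]<\infty$; (iii) the envelope condition in \textbf{(A3)}; and (iv) fidi convergence. So everything preceding your last paragraph --- the product metric $\tilde d$, the Billingsley chaining in $s$, the M\'oricz--Serfling maximal inequality --- is unnecessary for this route, and the paper omits it entirely.

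One point you pass over a bit quickly: condition (ii) above is not a consequence of Theorem~\ref{conv} alone. The paper verifies it by applying Lemma~\ref{moment inequality} to $Z_t:=\ph(X_t)-E[\ph(X_t)]$, using the uniform moment bound $\sup_{\ph\in\cF}E[|\ph(X_1)|^{Q(2+\gamma)/2}]\le M$ from \textbf{(A3)} to get a $\tau$ independent of $\ph$, hence $E[|G_n(1,\ph)|^Q]^{1/Q}\le C\max(n^{-1/2},\tau)$ uniformly. You should make this step explicit rather than folding it into ``Theorem~\ref{conv} together with \textbf{(A3)} verifies their hypotheses.''
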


The proof is given in Section \ref{Proofs}. Note that due to the strict stationarity assumption, condition (\ref{eq:bounding function}) in Theorem \ref{conv} simplifies to
\[E\left[|b(X_1)|^{i\frac{2+\gamma}{2}}\right]^{\frac{1}{2}}\le \ep, \ \forall \ b\in\cB, \ \forall \ i=2,\dots,Q,\]
and the semi-norm $d$ on $\cF$ simplifies to
\[d(\ph,\psi):=E\left[|\ph(X_1)-\psi(X_1)|^{Q\frac{2+\gamma}{2}}\right]^{\frac{1}{Q}\frac{2}{2+\gamma}}.\]


\section{Applications}\label{Applications}

Empirical process theory is a powerful tool for proofs of asymptotic results in mathematical statistics. In this section, two examples will be shown that cannot be treated with results from the mentioned literature, but where Theorem \ref{conv} is applicable. Note that the result is also applied in a working paper of Mohr \& Neumeyer in the context of changepoint detection in time series regression models (see proof of Theorem 3.1. (ii) in \cite{Mohr2019}).


\subsection{A special SETAR-model}

An interesting example of a nonlinear times series model is the so called \textit{self exciting threshold autoregressive} (SETAR) model. See Tong \cite{Tong1983} for a comprehensive introduction. As a special case of this model class, consider a stochastic process $\{Y_t\in\R:t\in\Z\}$, such that
\begin{align}
Y_t=
\begin{cases}
\mu_1+\ep_t, & \text{ if } \ Y_{t-1}\le z \\
\mu_2+\ep_t, & \text{ if } \ Y_{t-1}>z
\end{cases}, \ t=1,\dots,n, \label{eq:SETAR_01}
\end{align}
with unknown $\mu_1,\mu_2\in\R$ and $z\in\R$, called the \textit{threshold}. Let $\{\ep_t\in\R:t\in\Z\}$ be strictly stationary and strongly mixing with mixing coefficients $\alpha(\cdot)$ satisfying
\[\sum\limits_{t=1}^{\infty}t^2\alpha(t)^{\frac{\gamma}{2+\gamma}}<\infty,\]
for some $\gamma\in(0,2)$. Let additionally the following moment constraints hold
\[E[\ep_t|\cF^{t-1}]=0,E\left[\ep^2_t|Y_{t-1}\right]=\si^2\in(0,\infty) \text{ and } E\left[|\ep_t|^{2(2+\gamma)}|Y_{t-1}\right]\le c\in(0,\infty),\]
a.s. where $\cF^{t-1}:=\si(\ep_j,Y_j:j\le t-1)$. 
%
Given observations $Y_0,\dots,Y_n$, it can be tested, whether there exists a $z\in\R$, such that (\ref{eq:SETAR_01}) holds with $\mu_1\neq \mu_2$, by considering
\begin{align}
\hat{\mu}_1(z)-\hat{\mu}_2(z)\label{eq:SETAR_02}
\end{align}
for $z\in\R$, where
\[\hat{\mu}_1(z):=\frac{1}{\hat{F}_Y(z)}\frac{1}{n}\sum\limits_{i=1}^{n}Y_iI\{Y_{i-1}\le z\}, \hat{\mu}_2(z):=\frac{1}{1-\hat{F}_Y(z)}\frac{1}{n}\sum\limits_{i=1}^{n}Y_iI\{Y_{i-1}> z\}\]
and $\hat{F}_Y(z):=\frac{1}{n}\sum_{i=1}^{n}I\{Y_{i-1}\le z\}$. Under
\[H_0: \ \exists \mu\in\R: \mu_1=\mu_2=\mu,\] 
the term in (\ref{eq:SETAR_02}) is expected to be small for all $z\in\R$, and large for some $z\in\R$ under the alternative $H_1: \ \mu_1\neq\mu_2$. 
Under $H_0$, it holds that uniformly in $z\in\R$
\begin{align*}
\sqrt{n}\hat{F}_Y(z)(1-\hat{F}_Y(z))\left(\hat{\mu}_1(z)-\hat{\mu}_2(z)\right)
&=\frac{1}{\sqrt{n}}\sum\limits_{i=1}^{n}\ep_i\left(I\{Y_{i-1}\le z\}-F_Y(z)\right)+o_P(1),
\end{align*}
where $F_Y$ is the distribution function of $Y_t$ for all $t\in\Z$, which does not depend on $t$ under $H_0$. 
%
Condition \textbf{(A1)} of Theorem \ref{conv} is then satisfied for $\{(\ep_t,Y_{t-1})\in\R\times \R:t\in\Z\}$ under $H_0$ by assumption. Furthermore, defining the function class
\[\cF:=\{(\ep,y)\mapsto \ep (I\{y\le z\}-F_Y(z)):z\in\R\},\]
condition \textbf{(A2)} of Theorem \ref{conv} holds for $Q=4$ and $\gamma\in(0,2)$ from above.
The existence of the $2(2+\gamma)$-th absolute moments of $\ep_t$ conditioned on $Y_{t-1}$ is needed to additionally ensure the validity of (\ref{eq:bounding function}). Hence, Theorem \ref{conv} is applicable to the empirical process
\[\left\{\frac{1}{\sqrt{n}}\sum\limits_{i=1}^{n}\left(\ph(\ep_i,Y_{i-1})-E[\ph(\ep_i,Y_{i-1})]\right):\ph\in\cF\right\}, n\in\N.\]

Note that $\cF$ is not uniformly bounded and therefore, to the best of our knowledge, non of the existing literature is applicable. Identifying
\[T_n:=\left\{T_n(z):=\frac{1}{\sqrt{n}}\sum\limits_{i=1}^{n}\ep_i\left(I\{Y_{i-1}\le z\}-F_Y(z)\right):z\in\R\right\}, n\in\N\]
with above process, it holds that $T_n$ converges weakly to a centered Gaussian process $G$ with covariance function
\[Cov(G(z_1),G(z_2))=\si^2(F_Y(z_1\mi z_2)-F_Y(z_1)F_Y(z_2)),\]
%

Let $B_0$ denote a Brownian Bridge on $[0,1]$ and $\hat{\si}^2$ be some consistent estimator for $\si^2$. If $F_Y$ is continuous, it then holds that
\begin{align*}
T_{n1}:=
\frac{1}{\sqrt{\hat{\si}^2}}\sup\limits_{z\in\R}|T_n(z)|
\overset{\cD}{\to}\frac{1}{\sqrt{\si^2}}\sup\limits_{z\in\R}|G(z)|
\overset{\cD}{=}\sup\limits_{z\in\R}|B_0(F_Y(z))|
=\sup\limits_{s\in[0,1]}|B_0(s)|
\end{align*}
and
\begin{align*}
T_{n2}:=
\frac{1}{\hat{\si}^2}\int\limits_{-\infty}^{\infty}|T_n(z)|^2w(z)dz
\overset{\cD}{\to}\frac{1}{\si^2}\int\limits_{-\infty}^{\infty}|G(z)|^2w(z)dz
\overset{\cD}{=}\int\limits_{-\infty}^{\infty}|B_0(F_Y(z))|^2w(z)dz,
\end{align*}
for some weighting function $w:\R\to\R$, such that the integral exists. Note that if $Y_t$ is absolutely continuous with density $f_Y$, then for $w\equiv f_Y$, it holds that 
\[T_{n2}\overset{\cD}{\rightarrow} \int\limits_{0}^{1}|B_0(s)|^2ds.\]

In this case, both limiting distributions are free of unknown parameters and asymptotic tests for $H_0$ based on $T_{n1}$ and $T_{n2}$ can be constructed.


\subsection{Changepoint detection in a regression model with dependent innovations}

The following example is a generalization of the nonparametric changepoint test of Burke \& Bewa \cite{Burke2013261}. Let $\{(Y_t,X_t)\in\R\times\R^d:t\in\Z\}$ be a stochastic process with
\begin{align*}
Y_t=m_t(X_t)+U_t, \ t=1,\dots,n,
\end{align*}
with unknown $m_t:\R^d\to\R$, innovations $\{U_t\in\R:t\in\Z\}$ and i.i.d.~regressors $\{X_t\in\R^d:t\in\Z\}$ with distribution function $F_X$. For some unknown $m:\R^d\to\R$, not depending on $t\in\Z$, consider the following null hypothesis
\begin{align*}
H_0: m_t(\cdot)=m(\cdot), \ \forall \ t=1,\dots,n.
\end{align*}

Furthermore, for some unknown $\sigma:\R^d\to\R$ and some strictly stationary, strongly mixing sequence $\{\ep_t\in\R:t\in\Z\}$, that is mutually independent of the process of the regressors, let
\[U_t=\sigma(X_t)\ep_t, \ t=1,\dots,n.\]

Let furthermore for some $\gamma>0$ and some even $Q>d(2+\gamma)$ the following moment constraints hold
\begin{align*}
&E\left[\ep_t|\cF^{t-1}\right]=0, \text{ with } \cF^{t-1}=\sigma(\ep_j:j\le t-1), \ \forall \ t\in\Z,\\
&E[\ep_1^2]=1, \ E\left[|\ep_1|^{Q\frac{2+\gamma}{2}}\right]<\infty,
\end{align*}
the following integrals exist
\begin{align*}
\int |m(u)|^{Q\frac{2+\gamma}{2}}dF_X(u)<\infty, \ \int |\sigma(u)|^{Q\frac{2+\gamma}{2}}dF_X(u)<\infty,
\end{align*}
and with $\alpha(\cdot)$ being the mixing coefficients, the following series converge
\[\sum\limits_{t=1}^{\infty}t^{Q-2}\alpha(t)^{\frac{\gamma}{2+\gamma}}<\infty.\]

A test for $H_0$ could be based on the following test statistic
\begin{align*}
\beta_n(s,z):=\frac{1}{\sqrt{n}}\left(\sum\limits_{i=1}^{\lf ns \rf}Y_iI\{X_i\le z\}-\frac{\lf ns \rf}{n}\sum\limits_{i=1}^{n}Y_iI\{X_i\le z\}\right),
\end{align*}
for $s\in[0,1]$ and $z\in\R^d$. Burke \& Bewa \cite{Burke2013261} consider this test in an i.i.d.~setting. It can be shown that
\begin{align*}
\beta_n(s,z)=\alpha_n(s,z)-s\alpha_n(1,z)+o_P(1),
\end{align*}
uniformly in $s\in[0,1]$ and $z\in\R^d$, where 
\[\alpha_n(s,z):=\frac{1}{\sqrt{n}}\sum\limits_{i=1}^{\lf ns \rf}\left(Y_iI\{X_i\le z\}-E[Y_iI\{X_i\le z\}]\right).\]

Under $H_0$, $\{(Y_t,X_t)\in\R\times\R^d:t\in\Z\}$ is strictly stationary and condition \textbf{(A1)} of Theorem \ref{conv} is satisfied by assumption. Defining 
\[\cF:=\left\{(y,x)\mapsto yI\{x\le z\}:z\in\R^d\right\}\] 
it holds that $N_{[~]}(\epsilon,\cF,L_2(O))=O(\epsilon^{-2d})$ where $X_1\sim P$. The integral in condition \textbf{(A2)} therefore exists for all $Q>d(2+\gamma)$. The moment assumptions on $\ep_1$, $m$ and $\sigma$ are needed to additionally ensure the validity of (\ref{eq:bounding function}) and condition \textbf{(A3)} of Corollary \ref{cor2}. An envelope function is given by $F:\R\times\R^d\to\R; (y,x)\mapsto y$. Hence, Corollary \ref{cor2} is applicable to the sequential empirical process
\[\left\{\frac{1}{\sqrt{n}}\sum\limits_{i=1}^{\lf ns \rf}\left(\ph(Y_i,X_i)-E[\ph(Y_i,X_i)]\right):s\in[0,1],\ph\in\cF\right\}, n\in\N.\]

Identifying $\left\{\alpha_n(s,z):s\in[0,1],z\in\R^d\right\}$ with above process, it can be shown that it converges weakly to a centered Gaussian process. Together with the continuous mapping theorem, this implies the weak convergence of $\left\{\beta_n(s,z):s\in[0,1],z\in\R^d\right\}$ to a centered Gaussian process $\Gamma$ with covariance function
\[Cov(\Gamma(z_1,s_2),\Gamma(z_2,s_2))=(s_1\mi s_2-s_1s_2)\left(H_{1}(z_1\mi z_2)+H_2(z_1\mi z_2)-G_1(z_1)G_1(z_2)\right),\]
where 
\begin{align*}
H_{1}(z)&:=\int_{(-\infty,z]}m^2(u)dF_X(u), \ G_1(z):=\int_{(-\infty,z]}m(u)dF_X(u),
\\
H_2(z)&:=\int_{(-\infty,z]}\sigma^2(u)dF_X(u).
\end{align*} 


\section{Proofs of main results}\label{Proofs}

The key tool in proving Theorem \ref{conv} is a moment inequality for $G_n(1,\ph)$, i.e. for the empirical process evaluated at some function $\ph$, which is stated in the following lemma. It is a generalization of Lemma 3.1 of Andrews \& Pollard \cite{Andrews1994549}, who proved a moment inequality for bounded, strongly mixing random variables. Extending this result to unbounded random variables, makes it possible to extend the uniform CLT to unbounded function classes. Nevertheless, it comes at the cost of moment assumptions. Note that similar results are available, for example Theorem 2 on page 26 in \cite{Doukhan1995} or Corollary A.0.1 on page 319 in \cite{Politis1999}.

\begin{lemma}\label{moment inequality}
Let $\{Z_{n,t}:1\le t\le n,n\in\N\}$ be a strongly mixing triangular array of random variables with values in $\R$ and with mixing coefficient $\alpha(\cdot)$. Let furthermore for some even $Q\ge 2$ and some $\gamma>0$, $\tau>0$
\begin{enumerate}
\item[\textbf{(i)}] $\sum\limits_{t=1}^{\infty}t^{Q-2}\alpha(t)^{\frac{\gamma}{2+\gamma}}<\infty$ and
\item[\textbf{(ii)}] $E[Z_{n,t}]=0$, $E\left[|Z_{n,t}|^{i\frac{2+\gamma}{2}}\right]\le \tau^{2+\gamma}$, for all $i=2,\dots,Q$ and $1\le t\le n,n\in\N$.
\end{enumerate}
Then
\begin{equation}
E\left[\left|\frac{1}{\sqrt{n}}\sum\limits_{i=1}^{n}Z_{n,i}\right|^Q\right]^{\frac{1}{Q}}\le C \max\left(n^{-\frac{1}{2}},\tau\right), \ \forall \ n\in\N,\label{eq:moment inequality}
\end{equation}
for some constant $C$ only depending on $Q, \gamma$ and the mixing coefficient.
\end{lemma}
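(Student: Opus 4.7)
The approach is to extend Lemma~3.1 of Andrews \& Pollard \cite{Andrews1994549}, which treats uniformly bounded $Z_{n,t}$, to the present unbounded setting by using the stratified moment hypothesis (ii) in place of the $L^{\infty}$-bound. Since $Q$ is even, $E[|S_n|^Q]=E[S_n^Q]$ with $S_n:=\sum_{i=1}^n Z_{n,i}$, and expanding the $Q$-fold product and grouping by order statistics gives
\[
E[|S_n|^Q]\le Q!\sum_{1\le j_1\le\cdots\le j_Q\le n}\bigl|E[Z_{n,j_1}\cdots Z_{n,j_Q}]\bigr|.
\]

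For a fixed ordered tuple I would bound the expectation by iterated application of Davydov's covariance inequality
\[
|\mathrm{Cov}(X,Y)|\le C\,\alpha^{\gamma/(2+\gamma)}\|X\|_{2+\gamma}\|Y\|_{2+\gamma},
\]
in place of the $L^{\infty}$-based splitting used in \cite{Andrews1994549}. The $L^{2+\gamma}$-norm of a partial product of length $m$ is dispatched by H\"older's inequality:
\[
\|Z_{n,j_l}\cdots Z_{n,j_{l+m-1}}\|_{2+\gamma}\le\prod_{k=l}^{l+m-1}\|Z_{n,j_k}\|_{m(2+\gamma)}\le\tau,
\]
where the last step invokes (ii) with $i=2m$, giving $\|Z_{n,j_k}\|_{m(2+\gamma)}\le\tau^{1/m}$. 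This estimate is admissible only when $m\le Q/2$, which dictates the splitting strategy: split the full product first at the midpoint $Q/2$, producing two blocks of length $Q/2$, and then recurse inside each block, where any subsequent split automatically respects the length-$Q/2$ constraint. In this way, exactly as in \cite{Andrews1994549}, one collects a bound
\[
|E[Z_{n,j_1}\cdots Z_{n,j_Q}]|\le C\,\tau^{Q}\,h(d_1,\ldots,d_{Q-1})
\]
with $d_k:=j_{k+1}-j_k$ and $h$ a product of $\alpha(d_k)^{\gamma/(2+\gamma)}$ factors dictated by the binary tree of splits.

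Summing over ordered tuples exactly as in \cite{Andrews1994549}, but with $B^Q$ replaced by $\tau^Q$ and $\alpha(t)$ replaced by $\alpha(t)^{\gamma/(2+\gamma)}$, assumption (i) yields
\[
E[|S_n|^Q]\le C\bigl(n^{Q/2}\tau^Q+n\,\tau^{2}\bigr),
\]
where the second term is the diagonal contribution from tuples with coinciding indices (bounded via $\|Z\|_Q\le\tau^{2/Q}$ from Lyapunov and (ii)) and is dominated by the first as soon as $\tau\ge n^{-1/2}$; in the complementary regime both are $O(1)$. Taking $Q$-th roots and dividing by $n^{Q/2}$ produces $\|n^{-1/2}S_n\|_Q\le C\max(n^{-1/2},\tau)$, as claimed. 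The main obstacle is the compatibility of the splitting scheme with (ii): \cite{Andrews1994549} always splits at the largest gap, which optimally places the $\alpha$-factor where it is smallest, but here the stratified bound on $\|Z\|_p$ only permits H\"older on blocks of length $\le Q/2$, forcing the first split to be at the midpoint rather than the maximum gap. Verifying that the resulting sum over gap configurations is still absorbed by the series $\sum_t t^{Q-2}\alpha(t)^{\gamma/(2+\gamma)}$ of (i) --- in particular, that the exponent $Q-2$ on $t$ is the correct match for the midpoint-splitting scheme --- is the technical heart of the proof.
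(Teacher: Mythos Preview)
Your overall plan---expand the $Q$-th moment, replace the $L^\infty$-Davydov step of \cite{Andrews1994549} by a moment-based covariance inequality, then count tuples---matches the paper. The gap is the midpoint constraint you impose. It arises only because you apply Davydov with the symmetric exponents $p=q=2+\gamma$, which forces a block of length $m$ into $L^{m(2+\gamma)}$ per factor and hence $2m\le Q$ under~(ii). This is unnecessary. The paper uses the multivariate splitting inequality of Su--Chiang type (Lemma~\ref{Su D.1}) with $g(x_1,\dots,x_Q)=x_1\cdots x_Q$ and $\delta=\gamma/2$, which only requires $E\bigl[|Z_{n,i_1}\cdots Z_{n,i_Q}|^{(2+\gamma)/2}\bigr]\le\tau^{2+\gamma}$; H\"older across \emph{all} $Q$ factors reduces this to the single case $i=Q$ of~(ii), regardless of where the split is placed. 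Equivalently, in ordinary Davydov take the asymmetric exponents $p=Q(2+\gamma)/(2m)$ and $q=Q(2+\gamma)/(2(Q-m))$; then $1/p+1/q=2/(2+\gamma)$ and the inner H\"older again lands on $i=Q$. Either way the split may sit at the largest gap, exactly as in \cite{Andrews1994549}.

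This matters because the midpoint split genuinely breaks the counting you flag as ``the technical heart.'' Already for $Q=4$ the covariance remainder at the midpoint is $C\tau^2\alpha(j_3-j_2)^{\gamma/(2+\gamma)}$, but the outer gaps $j_2-j_1$ and $j_4-j_3$ are unconstrained, so the sum over ordered tuples is of order $n^3\tau^2$, not the required $\max(1,n^2\tau^4)$. Splitting at the \emph{largest} gap is precisely what caps every other gap by $g$ and produces the $g^{Q-2}$ tuple count that is absorbed by~(i). With this restored, the paper does not recurse on a binary tree at all: it runs a straight induction on $Q$ (all integers $\ge 2$) for the quantity $\sum_{\bm{i}\in\bm{I}^Q}|E[Z_{n,i_1}\cdots Z_{n,i_Q}]|$, where the covariance remainder contributes $O(n\tau^2)$ by the gap-counting and the product-of-expectations term is handled by the induction hypothesis on the two factors of lengths $m$ and $Q-m$. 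This yields the polynomial bound $C'\sum_{j\le\lfloor Q/2\rfloor}(n\tau^2)^j$, from which the stated inequality follows for even $Q$.
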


For the proof of Lemma \ref{moment inequality}, the following covariance inequality for strongly mixing triangular arrays is used. It was stated by Sun and Chiang \cite{Su19973} (see Lemma 2.1) for $\alpha$-mixing sequences of real valued random variables. Su and Xiao \cite{Su2008347} extended it to $\alpha$-mixing sequences of multivariate random variables (see Lemma D.1). As Su and Ullah \cite{Su2013187} argued, the result is also valid for  $\alpha$-mixing triangular arrays of random variables (see Lemma A.2 in the supplement material to \cite{Su2013187}). 

\begin{lemma}\label{Su D.1}
Let $\{\xi_{n,i}:1\le i\le n,n\in\N\}$ be an $l$-dimensional strongly mixing triangular array of random variables with mixing coefficient $\alpha(\cdot)$. Let $F_{n,i_1\dots i_m}$ denote the distribution function of $(\xi_{n,i_1},\dots,\xi_{n,i_m})$. For some $m>1$ and integers $(i_1,\dots,i_m)$ such that $1\le i_1<\dots<i_m\le n$, let $g$ be a Borel measurable function such that
\[\int |g(x_1,\dots,x_m)|^{1+\delta}dF_{n,i_1\dots i_m}(x_1,\dots,x_m)\le M_n\]
and
\[\int |g(x_1,\dots,x_m)|^{1+\delta}dF_{n,i_1\dots i_j}(x_1,\dots,x_j)dF_{n,i_{j+1}\dots i_m}(x_{j+1},\dots,x_m)\le M_n,\]
for some $\delta>0$. Then, it holds that
\begin{align*}
&\left|\int g(x_1,\dots,x_m)dF_{n,i_1\dots i_m}(x_1,\dots,x_m)\right.\\
&\hspace{3cm}\left.-\int g(x_1,\dots,x_m)dF_{n,i_1\dots i_j}(x_1,\dots,x_j)dF_{n,i_{j+1}\dots i_m}(x_{j+1},\dots,x_m)\right|\\
&\le 4M_n^{\frac{1}{1+\delta}}\alpha(i_{j+1}-i_j)^{\frac{\delta}{1+\delta}}.
\end{align*}
\end{lemma}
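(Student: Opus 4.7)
The plan is to reduce the inequality to the case of a bounded integrand $g$, where it follows directly from the definition of the $\alpha$-mixing coefficient, and then to extend to unbounded $g$ by truncation, with the truncation level chosen to balance the mixing bound against the $(1+\delta)$-moment bound $M_n$.

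First, I would treat the case $|g|\le K$. Writing $\mu := F_{n,i_1\cdots i_m}$ and $\nu := F_{n,i_1\cdots i_j}\otimes F_{n,i_{j+1}\cdots i_m}$ for the joint and the product law, the bounded-case estimate
$$\Big|\int g\,d\mu - \int g\,d\nu\Big| \;\le\; 4K\,\alpha(i_{j+1}-i_j)$$
should be obtained by approximating $g$ by a simple function supported on product sets $A\times B$ with $A\in\sigma(\xi_{n,i_1},\ldots,\xi_{n,i_j})$ and $B\in\sigma(\xi_{n,i_{j+1}},\ldots,\xi_{n,i_m})$, and then invoking the defining bound $|\mu(A\times B)-\nu(A\times B)|\le\alpha(i_{j+1}-i_j)$ for each rectangle together with a positive/negative-part decomposition of $g$ to keep track of absolute values.

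For general $g$, I would truncate by setting $g_K := g\cdot\mathbf{1}_{\{|g|\le K\}}$, apply the bounded-case estimate to $g_K$, and bound the truncation error via Markov's inequality, uniformly under both measures:
$$\int|g-g_K|\,d\mu \;\le\; K^{-\delta}\int|g|^{1+\delta}\,d\mu \;\le\; M_n\,K^{-\delta},$$
and similarly for $\nu$. The triangle inequality then gives
$$\Big|\int g\,d\mu - \int g\,d\nu\Big| \;\le\; 4K\,\alpha(i_{j+1}-i_j) + 2M_n\,K^{-\delta}.$$
Optimising by taking $K = (M_n/\alpha(i_{j+1}-i_j))^{1/(1+\delta)}$ balances the two summands and delivers a bound of the form $c\,M_n^{1/(1+\delta)}\,\alpha(i_{j+1}-i_j)^{\delta/(1+\delta)}$.

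The main obstacle is sharpening the constants so that the final prefactor is exactly $4$: a naive balancing of the two error terms above gives $c\approx 6$, so the bounded-case estimate must be pushed a bit---for instance by using the Davydov--Rio variational formulation of the $\alpha$-mixing covariance inequality and treating the positive and negative parts of $g$ separately. A subtle but essential point is that the moment bound $\int|g|^{1+\delta}\le M_n$ is assumed to hold under \emph{both} the joint and the product measure; without this symmetry the truncation error cannot be controlled uniformly on both sides of the triangle inequality, and the whole optimisation step breaks down.
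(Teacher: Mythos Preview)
The paper does not actually prove this lemma: it cites Lemma~2.1 of Sun and Chiang and remarks that the extension to triangular arrays is verbatim, so there is nothing to compare against beyond that literature reference. Your truncation-plus-optimisation scheme is exactly the classical route (Volkonskii--Rozanov, Davydov, and then Sun--Chiang for the non-product integrand), so your proposal is in line with the cited source.

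One small caveat on your sketch of the bounded case: the inequality $\big|\int g\,d(\mu-\nu)\big|\le 4K\alpha$ for a \emph{joint} function $g$ of both blocks does not follow just by writing $g$ as a simple function on rectangles and summing the defining bound $|\mu(A\times B)-\nu(A\times B)|\le\alpha$ term by term---for $g=\sum c_k\,1_{A_k\times B_k}$ this yields $\sum|c_k|\,\alpha$, and $\sum|c_k|$ can be arbitrarily larger than $K$ (and the level sets $\{g>t\}$ are not rectangles either). One really does need the Volkonskii--Rozanov argument, e.g.\ via conditional expectations or the variational characterisation you mention, rather than the naive decomposition. Your concern about landing on the constant $4$ rather than $6$ is immaterial for this paper: only the form $C\,M_n^{1/(1+\delta)}\alpha(i_{j+1}-i_j)^{\delta/(1+\delta)}$ with some absolute $C$ is used downstream in the proof of Lemma~\ref{moment inequality}.
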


Sun and Chiang gave a proof of their version Lemma 2.1 in \cite{Su19973}. The proof of the generalizing result Lemma \ref{Su D.1} works analogously and is therefore omitted.

\begin{proof}[Proof of Lemma \ref{moment inequality}]

The proof is closely related to the proof of Lemma 3.1 by Andrews \& Pollard \cite{Andrews1994549}, but uses the covariance inequality in Lemma \ref{Su D.1}. 

To begin with, it will be proven via induction that for all $Q\ge 2$ (not necessarily even) satisfying assumptions \textbf{(i)} and \textbf{(ii)}, there exists a constant $C'$ only depending on $Q, \gamma$ and the mixing coefficient, such that
\begin{equation}
\sum\limits_{\bm{i}\in \bm{I}^Q}\left|E\left[Z_{n,i_1}\dots Z_{n,i_Q}\right]\right|\le C' \left(n\tau^2+\dots +(n\tau^2)^{\lf\frac{Q}{2}\rf}\right), \ \forall \ n\in\N,\label{eq:moment inequality_01}
\end{equation}
where $\bm{I}^Q:=\{\bm{i}=(i_1,\dots,i_Q)\in\{1,\dots,n\}^Q:i_1\le \dots\le i_Q\}$.

Let for $Q=2$ the assumptions of Lemma \ref{moment inequality} hold for some $\gamma,\tau>0$. Let furthermore $\{\tilde{Z}_{n,t}:1\le t\le n,n\in\N\}$ be an independent copy of $\{Z_{n,t}:1\le t\le n,n\in\N\}$. Then applying \textbf{(ii)},
\begin{align*}
E\left[|Z_{n,i_1}\tilde{Z}_{n,i_2}|^{\frac{2+\gamma}{2}}\right]
&\le E\left[|Z_{n,i_1}|^{2\frac{2+\gamma}{2}}\right]^{\frac{1}{2}}E\left[|\tilde{Z}_{n,i_2}|^{2\frac{2+\gamma}{2}}\right]^{\frac{1}{2}}\\
&\le\tau^{2+\gamma}
\end{align*} 
and similarly
\begin{align*}
E\left[|Z_{n,i_1}Z_{n,i_2}|^{\frac{2+\gamma}{2}}\right]
&\le\tau^{2+\gamma}
\end{align*}
holds for all $i_1,i_2\in\{1,\dots,n\}$ with $i_1\neq i_2$ and $n\in\N$. Lemma \ref{Su D.1} can therefore be applied with $g(x_1,x_2):=x_1x_2$, $\delta:=\frac{\gamma}{2}$ and $M_n:=\tau^{2+\gamma}$ for all $n\in\N$. It implies that
\begin{align*}
\sum\limits_{i_1=1}^{n}\sum\limits_{\substack{i_2=1\\i_1< i_2}}^{n}\left|E\left[Z_{n,i_1}Z_{n,i_2}\right]\right|
&=\sum\limits_{i_1=1}^{n}\sum\limits_{\substack{i_2=1\\i_1< i_2}}^{n}\left|E\left[Z_{n,i_1}Z_{n,i_2}\right]-E\left[Z_{n,i_1}\right]E\left[Z_{n,i_2}\right]\right|\\
&\le \sum\limits_{i_1=1}^{n}\sum\limits_{\substack{i_2=1\\i_1< i_2}}^{n}\alpha(i_2-i_1)^{\frac{\gamma}{2+\gamma}}4\left(\tau^{2+\gamma}\right)^{\frac{2}{2+\gamma}}\\
&\le n\tau^2 4\sum\limits_{t=1}^{\infty}\alpha(t)^{\frac{\gamma}{2+\gamma}}\\
&= C'' n\tau^2,
\end{align*}
for $C'':=4\sum_{t=1}^{\infty}\alpha(t)^{\frac{\gamma}{2+\gamma}}<\infty$ by assumption \textbf{(i)}, a constant therefore only depending on $\gamma$ and the mixing coefficient. Furthermore, using this and
\begin{align*}
E\left[|Z_{n,i_1}|^2\right]\le E\left[|Z_{n,i_1}|^{2\frac{2+\gamma}{2}}\right]^{\frac{2}{2+\gamma}}\le \left(\tau^{2+\gamma}\right)^{\frac{2}{2+\gamma}}=\tau^2,
\end{align*}
for all $i_1\in\{1,\dots,n\}$ by assumption \textbf{(ii)}, leads to
\begin{align*}
\sum\limits_{i_1=1}^{n}\sum\limits_{\substack{i_2=1\\i_1\le i_2}}^{n}\left|E\left[Z_{n,i_1}Z_{n,i_2}\right]\right|
&=\sum\limits_{i_1=1}^{n}\left|E\left[Z_{n,i_1}^2\right]\right|+\sum\limits_{i_1=1}^{n}\sum\limits_{\substack{i_2=1\\i_1< i_2}}^{n}\left|E\left[Z_{n,i_1}Z_{n,i_2}\right]\right|\\
&\le n \tau^2+C''n\tau^2\\
&=C' n\tau^2,
\end{align*}
with $C':=(1+C'')$, which is the assertion of (\ref{eq:moment inequality_01}) for $Q=2$. For the inductive step, let now $Q>2$ be arbitrary, but fixed and let the assertion in (\ref{eq:moment inequality_01}) be true for all integers in $\{2,\dots,Q-1\}$. Then it is to show that (\ref{eq:moment inequality_01}) holds for $Q$ as well. Let therefore assumptions \textbf{(i)} and \textbf{(ii)} be satisfied for this arbitrary, but fixed $Q>2$ and for some $\gamma>0$ and $\tau>0$. Note that then the assumptions are satisfied for all integers in $\{2,\dots,Q-1\}$ as well.

The idea of the proof is the following. First, the smallest index where the gap between two succeeding indices is largest and positive (to exclude the case, where all indices are equal) is identified. The random variables after this time point will then be replaced by independent copies of themselves. For the new term, the induction hypothesis can be used as there will be less than $Q$ indices left. The remainder term can be bounded using Lemma \ref{Su D.1}. Following the notation of Andrews \& Pollard \cite{Andrews1994549} let for all $\bm{i}\in\bm{I}^Q$ 
\[G(\bm{i}):=\max\left\{(i_{j+1}-i_j): (i_{j+1}-i_j)>0, 1\le j \le Q-1\right\}\] 
and
\[m(\bm{i}):=\min\left\{j\in\{1,\dots,Q-1\}:(i_{j+1}-i_{j})=G(\bm{i})\right\}.\]

Then, it can be obtained that
\begin{align}
&\sum\limits_{\bm{i}\in \bm{I}^Q} \left|E\left[Z_{n,i_1}\cdots Z_{n,i_Q}\right]\right|\notag\\
&=\sum\limits_{\substack{\bm{i}\in \bm{I}^Q\\i_1=\dots=i_Q}}|E[Z_{n,i_1}\cdots Z_{n,i_Q}]|+\sum\limits_{m=1}^{Q-1}\sum\limits_{\substack{\bm{i}\in \bm{I}^Q\\ m(\bm{i})=m}}\left|E\left[Z_{n,i_1}\cdots Z_{n,i_Q}\right]\right|\notag\\
&\le\sum\limits_{i_1=1}^{n}\left|E\left[Z_{n,i_1}^Q\right]\right|\label{eq:induction_00}\\
&+\sum\limits_{m=1}^{Q-1}\sum\limits_{\substack{\bm{i}\in \bm{I}^Q\\ m(\bm{i})=m}} \left|E\left[Z_{n,i_1}\cdots Z_{n,i_Q}\right]-E\left[Z_{n,i_1}\cdots Z_{n,i_m}\right]E\left[Z_{n,i_{m+1}}\cdots Z_{n,i_Q}\right]\right|\label{eq:induction_01}\\
&+\sum\limits_{m=1}^{Q-1}\sum\limits_{\substack{\bm{i}\in \bm{I}^Q\\ m(\bm{i})=m}}\left|E\left[Z_{n,i_1}\cdots Z_{n,i_m}\right]E\left[Z_{n,i_{m+1}}\cdots Z_{n,i_Q}\right]\right|.\label{eq:induction_02}
\end{align}

Let first (\ref{eq:induction_00}) be considered. Using assumption \textbf{(ii)}, it holds that
\begin{align*}
\sum\limits_{i_1=1}^{n}\left|E\left[Z_{n,i_1}^Q\right]\right|
\le \sum\limits_{i_1=1}^{n} E\left[\left|Z_{n,i_1}\right|^{Q\frac{2+\gamma}{2}}\right]^{\frac{2}{2+\gamma}}
\le\sum\limits_{i_1=1}^{n} \left(\tau^{2+\gamma}\right)^{\frac{2}{2+\gamma}}
=n\tau^2.
\end{align*}

Let next (\ref{eq:induction_01}) be considered. Using H\"{o}lder's inequality and assumption \textbf{(ii)},
\begin{align*}
&E\left[|Z_{n,i_1}\cdots Z_{n,i_{k}} \tilde{Z}_{n,i_{k+1}} \cdots \tilde{Z}_{n,i_Q}|^{\frac{2+\gamma}{2}}\right]\\
&\le E\left[|Z_{n,i_1}|^{Q\frac{2+\gamma}{2}}\right]^{\frac{1}{Q}}\cdots E\left[|Z_{n,i_k}|^{Q\frac{2+\gamma}{2}}\right]^{\frac{1}{Q}}E\left[|\tilde{Z}_{n,i_{k+1}}|^{Q\frac{2+\gamma}{2}}\right]^{\frac{1}{Q}}\cdots E\left[|\tilde{Z}_{n,i_Q}|^{Q\frac{2+\gamma}{2}}\right]^{\frac{1}{Q}}\\
&\le \tau^{2+\gamma}
\end{align*}
and similarly 
\begin{align*}
E\left[|Z_{n,i_1}\cdots Z_{n,i_Q}|^{\frac{2+\gamma}{2}}\right]
&\le \tau^{2+\gamma}
\end{align*}
holds, for all $k\in\{1,\dots,Q-1\}$. Hence, applying Lemma \ref{Su D.1} with $g(x_1,\dots,x_Q):=x_1\cdots x_Q$, $\delta:=\frac{\gamma}{2}$ and $M_n:=\tau^{2+\gamma}$ for all $n\in\N$, implies for all $k\in\{1,\dots, Q-1\}$,
\begin{align*}
\left|E\left[Z_{n,i_1}\cdots Z_{n,i_Q}\right]-E\left[Z_{n,i_1}\cdots Z_{n,i_k}\right]E\left[Z_{n,i_{k+1}}\cdots Z_{n,i_Q}\right]\right|\le 4 \tau^2 \alpha(i_{k+1}-i_k)^{\frac{\gamma}{2+\gamma}}.
\end{align*}

Using this and distinguishing the indices furthermore by location of the gap $l\in\{1,\dots,n\}$ and size of the gap $g\in\{1,\dots,n\}$, (\ref{eq:induction_01}) can be bounded by
\begin{align}
4\tau^2\sum\limits_{m=1}^{Q-1}\sum\limits_{\substack{\bm{i}\in \bm{I}^Q\\ m(\bm{i})=m}} \alpha(i_{m+1}-i_m)^{\frac{\gamma}{2+\gamma}}
&=4\tau^2\sum\limits_{m=1}^{Q-1}\sum\limits_{l=1}^{n}\sum\limits_{g=1}^{n}\sum\limits_{\bm{i}\in\bm{I}_{m,g,l}^Q}\alpha(g)^{\frac{\gamma}{2+\gamma}}, \label{eq:induction_011}
\end{align}
where $\bm{I}^{Q}_{m,g,l}:=\{\bm{i}\in\bm{I}^{Q}:m(\bm{i})=m,G(\bm{i})=g,i_m=l\}$. A more detailed study of the set of indices $\bm{I}^{Q}_{m,g,l}$ will lead to a suitable bound for (\ref{eq:induction_011}). For fixed $m,l,g$, it will be investigated, how many elements the set $\bm{I}^{Q}_{m,g,l}$ contains at most. For all $\bm{i}=(i_1,\dots,i_Q)\in\bm{I}^{Q}_{m,g,l}$ the first $m-1$ indices $i_1,\dots,i_{m-1}$ satisfy
\begin{enumerate}
\item[$\bullet$] $1\le i_1\le \dots\le i_{m-1}\le i_m=l$,
\item[$\bullet$] $i_{j+1}-i_j< g$ for all $j=1,\dots,m-1$.
\end{enumerate}

With these restrictions, for fixed $i_2,\dots,i_m$, the sum over $i_1$ ranges over at most $g$ elements. For fixed $i_3,\dots,i_m$ the sum over $i_2$, again ranges over at most $g$ elements, with the above restrictions. Continuing in this way, there are at most $g^{m-1}$ choices for the first $m-1$ indices $i_1,\dots,i_{m-1}$. Because of $m(\bm{i})=m,i_m=l$ and $G(\bm{i})=g$, it holds that $i_{m+1}=l+g$ and therefore the last $Q-m-1$ indices $i_{m+2},\dots,i_Q$ satisfy the following restrictions
\begin{enumerate}
\item[$\bullet$] $l+g=i_{m+1}\le i_{m+2}\le \dots\le i_{Q}\le n$,
\item[$\bullet$] $i_{j+1}-i_j< g+1$ for all $j=m+1,\dots,Q$.
\end{enumerate}

Hence, following the same arguments as above, there are at most $(g+1)^{Q-m-1}$ choices for the last $Q-m-1$ indices $i_{m+2},\dots,i_Q$. Therefore (\ref{eq:induction_011}) can further be bounded by
\begin{align*}
&4\tau^2\sum\limits_{m=1}^{Q-1}\sum\limits_{l=1}^{n}\sum\limits_{g=1}^{n}g^{m-1}(g+1)^{Q-m-1}\alpha(g)^{\frac{\gamma}{2+\gamma}}\\
&\le 4\tau^2(Q-1)n\sum\limits_{g=1}^{\infty}(g+1)^{Q-2}\alpha(g)^{\frac{\gamma}{2+\gamma}}\\
&= C''n\tau^2,
\end{align*}
for $C'':=4(Q-1)\sum_{t=1}^{\infty}(t+1)^{Q-2}\alpha(t)^{\frac{\gamma}{2+\gamma}}<\infty$ by assumption \textbf{(i)}, a constant only depending on $Q$, $\gamma$ and the mixing coefficient. 

It is left to consider (\ref{eq:induction_02}). Introducing the following notation
\[B_n(i):=n\tau^2+\left(n\tau^2\right)^2+\dots+\left(n\tau^2\right)^{\lf \frac{i}{2}\rf} \ \forall \ i=1,\dots,Q-1, \ \forall \ n\in\N\]
and applying the induction hypotheses, it holds that
\begin{align*}
&\sum\limits_{m=1}^{Q-1}\sum\limits_{\substack{\bm{i}\in \bm{I}^Q\\ m(\bm{i})=m}}\left|E\left[Z_{n,i_1}\cdots Z_{n,i_m}\right]E\left[Z_{n,i_{m+1}}\cdots Z_{n,i_Q}\right]\right|\\
&\le\sum\limits_{m=1}^{Q-1}\sum\limits_{\bm{i}\in \bm{I}^m}\left|E\left[Z_{n,i_1}\cdots Z_{n,i_m}\right]\right|\sum\limits_{\bm{i}\in \bm{I}^{Q-m}}\left|E\left[Z_{n,i_{m+1}}\cdots Z_{n,i_Q}\right]\right|\\
&\le\sum\limits_{m=1}^{Q-1} C_{m}B_n(m)C_{Q-m}B_n(Q-m),
\end{align*}
for some constants $C_i$ only depending on $i$, $\gamma$ and the mixing coefficient for all $i=1,\dots,Q-1$. As $B_n(m)B_n(Q-m)$ is a polynomial in $n\tau^2$ of degree
\[\lf\frac{m}{2}\rf+\lf\frac{Q-m}{2}\rf\le\lf\frac{Q}{2}\rf,\]
there is a constant $C_{m,Q}$, such that $B_n(m)B_n(Q-m)\le C_{m,Q}B_n(Q)$. Hence, the above sum can be bounded by
\begin{align*}
\sum\limits_{m=1}^{Q-1}C_{m}C_{Q-m}C_{m,Q}B_n(Q)=C'''B_n(Q),
\end{align*}
for $C''':=\sum_{m=1}^{Q-1}C_{m}C_{Q-m}C_{m,Q}<\infty$. Putting the results for (\ref{eq:induction_00}), (\ref{eq:induction_01}) and (\ref{eq:induction_02}) together, it holds that
\begin{align*}
\sum\limits_{\bm{i}\in \bm{I}^Q}\left|E\left[Z_{n,i_1}\dots Z_{n,i_Q}\right]\right|
&\le n\tau^2+C''n\tau^2+C'''B_n(Q)\\
&\le C'\left(n\tau^2+\dots+(n\tau^2)^{\lf \frac{Q}{2} \rf }\right),
\end{align*}
for $C'=1+C''+C'''$ only depending on $Q$, $\gamma$ and the mixing coefficient, which completes the induction and therefore the proof of (\ref{eq:moment inequality_01}) for all $Q\ge 2$ satisfying the assumptions. 

Using
\begin{align*}
(n\tau^2)^i\le \max\left(1,(n\tau^2)^{\lf \frac{Q}{2} \rf}\right) \ \forall \ 1\le i\le \lf \frac{Q}{2} \rf
\end{align*}
it can be obtained that for $Q$ being even
\begin{align*}
E\left[\left|\frac{1}{\sqrt{n}}\sum\limits_{i=1}^{n}Z_{n,i}\right|^Q\right]^{\frac{1}{Q}}
&=n^{-\frac{1}{2}}E\left[\sum\limits_{i_1=1}^{n}\dots\sum\limits_{i_Q=1}^{n}Z_{n,i_1}\dots Z_{n,i_Q}\right]^{\frac{1}{Q}}\\
&\le n^{-\frac{1}{2}} (Q!)^{\frac{1}{Q}} \left(\sum\limits_{\bm{i}\in \bm{I}^Q}\left|E\left[Z_{n,i_1}\dots Z_{n,i_Q}\right]\right|\right)^{\frac{1}{Q}}\\
&\le n^{-\frac{1}{2}} (Q!)^{\frac{1}{Q}} C'^{\frac{1}{Q}} \left((n\tau^2)+\dots +(n\tau^2)^{\frac{Q}{2}}\right)^{\frac{1}{Q}}\\
&\le C \max\left(n^{-\frac{1}{2}},\tau\right),
\end{align*}
for $C'$ from inequality (\ref{eq:moment inequality_01}) and $C:=\left(C'Q!\frac{Q}{2}\right)^{\frac{1}{Q}}$ only depending on $Q,\gamma$ and the mixing coefficient, which proves the assertion of Lemma \ref{moment inequality}.

\end{proof}

Within the proof of Theorem \ref{conv}, let the following simplifying notation hold. Denote $G_n(\ph):=G_n(1,\ph)$ for measurable functions $\ph:\cX\to\R$.

\begin{proof}[Proof of Theorem \ref{conv}] 
The proof is closely related to the proof of Theorem 2.2 of Andrews \& Pollard \cite{Andrews1994549}. It will be shown that for all $\epsilon>0$, there exists a $\delta=\delta(\epsilon)>0$ and an $n_0=n_0(\epsilon)$, such that for all $n\ge n_0$,
\begin{align}
E^{\ast}\left[\sup\limits_{\{\ph,\psi\in\cF:d(\ph,\psi)<\delta\}}\left|G_n(\ph)-G_n(\psi)\right|^Q\right]^{\frac{1}{Q}}<\epsilon. \label{eq:AP}
\end{align}

Let therefore be $\epsilon>0$. Let for $k\in\N$, $\delta_k:=2^{-k}$, $\tau_k:=\delta_k^{\frac{2}{2+\gamma}}$ and $N_k:=N_{[~]}(\delta_k,\cF,\rho)$ and let $\cA_k$ be the approximating class and $\cB_k$ the bounding class from Definition \ref{brackets}, that are chosen such that, assumption (\ref{eq:bounding function}) in \textbf{(A2)} holds. In particular, it holds that for all $\ph\in\cF$, there exist an $a_k^*\in\cA_k$ and a $b_k^*\in \cB_k$, such that
\begin{align}
|\ph-a_k^*|\le b_k^*,\label{eq:partition1}
\end{align}
and for all $b\in\cB_k$
\begin{align}
\sup\limits_{n\in\N}\sup\limits_{1\le t\le n}E\left[|b(X_{n,t})|^{2}\right]^{\frac{1}{2}}\le\delta_k, \
\sup\limits_{n\in\N}\sup\limits_{1\le t\le n}E\left[|b(X_{n,t})|^{i\frac{2+\gamma}{2}}\right]^{\frac{1}{2}}\le\delta_k, \ \forall \ i=2,\dots,Q.\label{eq:partition2}
\end{align}

%
The proof splits into two steps. First, it will be shown that there exist an $m=m(\epsilon)$ and for each $\ph\in\cF$ a function $a_m^{(\ph)}\in\cA_m$ and an $n_1=n_1(\epsilon)$, such that, for all $n\ge n_1$,
\begin{align}
E^{\ast}\left[\sup\limits_{\ph\in\cF}\left|G_n(\ph)-G_n\left(a_m^{(\ph)}\right)\right|^Q\right]^{\frac{1}{Q}}<\frac{\epsilon}{8}. \label{eq:AP(3.4)}
\end{align}

Note that $a_m^{(\ph)}$ is not necessarily the corresponding approximating function, denoted by $a_m^*\in\cA_m$, from Definition \ref{brackets}, but rather results from a constructive argument, such that (\ref{eq:AP(3.4)}) holds. 

Secondly, for this fixed $m\in\N$, $\cF$ will be partitioned into $N_m$ many classes, each class containing all functions $\ph$ in $\cF$, that lead to the same $a_m^{(\ph)}\in\cA_m$, in step one. Within each class inequality (\ref{eq:AP(3.4)}) will be applied. By a right choice of functions the gap between two different classes can also be bridged suitably.

\textit{Step 1:} The proof of (\ref{eq:AP(3.4)}) is again divided into two parts. First a sequence $k(n)\to\infty$ and an $n_2=n_2(\epsilon)$ are chosen, such that for all $n\ge n_2$,
\begin{align}
E^{\ast}\left[\sup\limits_{\ph\in\cF}\left|G_n(\ph)-G_n\left(a_{k(n)}^*\right)\right|^Q\right]^{\frac{1}{Q}}<\frac{\epsilon}{16},\label{eq:AP(3.2)}
\end{align}
where for each $\ph\in\cF$ and $k(n)\in\N$, $a_{k(n)}^*$ is the corresponding approximating function in $\cA_{k(n)}$, as in (\ref{eq:partition1}) for $k=k(n)$. 

Secondly $m=m(\epsilon)$, and for each $\ph\in\cF$, $a_m^{(\ph)}\in\cA_m$ and $n_3=n_3(\epsilon)$ are chosen, such that for all $n\ge n_3$ with $k(n)>m$,
\begin{align}
E^{\ast}\left[\sup\limits_{\ph\in\cF}\left|G_n\left(a_{k(n)}^*\right)-G_n\left(a_{m}^{(\ph)}\right)\right|^Q\right]^{\frac{1}{Q}}<\frac{\epsilon}{16}.\label{eq:AP(3.3)}
\end{align}

Here, for each $\ph\in\cF$, $a_{k(n)}^*$ is the corresponding approximating function in $\cA_{k(n)}$ from (\ref{eq:partition1}), while $a_m^{(\ph)}\in\cA_m$ not necessarily is. It rather results from an iterative choice of functions $a_k\in\cA_k$ to $a_{k-1}\in\cA_{k-1}$ for $k=k(n),\dots,m+1$. The choice of $a_m:=a_m^{(\ph)}$ then depends on $\ph$ and $n$, as it is the last link in the chain, that starts with $a_{k(n)}:=a_{k(n)}^*$ (which depends on $\ph$ by Definition \ref{brackets}, despite the fact, that this is not reflected in the notation). Nevertheless, the choice of $m$ does only depend on $\epsilon$ eventually. Both (\ref{eq:AP(3.2)}) and (\ref{eq:AP(3.3)}) together imply (\ref{eq:AP(3.4)}) by choosing $n_1=\max(n_2,n_3)$.

\textit{Proof of (\ref{eq:AP(3.2)}):} Let $k(n)$ be the largest value of $k\in\N$, such that
\begin{align}
2^{-k\frac{2}{2+\gamma}}=\tau_k\ge n^{-\frac{1}{2}}. \label{eq:tau_k}
\end{align}

Note that then
\begin{align*}
\sqrt{n}\tau_{k(n)+1}^{\frac{2+\gamma}{2}}\le \sqrt{n}\left(n^{-\frac{1}{2}}\right)^{\frac{2+\gamma}{2}}=n^{-\frac{\gamma}{4}}\overset{n\to\infty}{\longrightarrow} 0
\end{align*}
holds. Using (\ref{eq:partition2}), it follows that
\begin{align*}
\sqrt{n}\sup\limits_{m\in\N}\sup\limits_{1\le t\le m}E\left[|b(X_{m,t})|^2\right]^{\frac{1}{2}}\le \sqrt{n}\delta_{k(n)}=\sqrt{n}2\delta_{k(n)+1}=\sqrt{n}2\tau_{k(n)+1}^{\frac{2+\gamma}{2}}=o(1), 
\end{align*}
for all $b\in\cB_{k(n)}$. Thus there exists an $n_2'=n_2'(\epsilon)$, such that
\begin{align}
2\sqrt{n}\sup\limits_{m\in\N}\sup\limits_{1\le t\le m}E\left[|b(X_{m,t})|^2\right]^{\frac{1}{2}}< \frac{\epsilon}{32},\label{eq:E-Wert}
\end{align}
for all $b\in\cB_{k(n)}$ and $n\ge n_2'$. Hence, for $\ph\in\cF$ and corresponding approximation function $a_{k(n)}^*\in\cA_{k(n)}$, applying (\ref{eq:partition1}), it holds that\\
\begin{align}
\left|G_n(\ph)-G_n\left(a_{k(n)}^*\right)\right|
&=\left|G_n\left(\ph-a_{k(n)}^*\right)\right|\notag\\
&\le \frac{1}{\sqrt{n}}\sum\limits_{i=1}^{n}\left(|\ph(X_{n,i})-a_{k(n)}^*(X_{n,i})|+E\left[|\ph(X_{n,i})-a_{k(n)}^*(X_{n,i})|\right]\right) \notag\\
&\le \frac{1}{\sqrt{n}}\sum\limits_{i=1}^{n}b_{k(n)}^*(X_{n,i})+\frac{1}{\sqrt{n}}\sum\limits_{i=1}^{n}E\left[b_{k(n)}^*(X_{n,i})\right]\notag\\
&=G_n\left(b_{k(n)}^*\right)+2\frac{1}{\sqrt{n}}\sum\limits_{i=1}^{n}E\left[b_{k(n)}^*(X_{n,i})\right]\notag\\
&\le G_n\left(b_{k(n)}^*\right)+2\sqrt{n}\sup\limits_{m\in\N}\sup\limits_{1\le t\le m}E\left[|b_{k(n)}^*(X_{m,t})|^2\right]^{\frac{1}{2}}\notag\\
&< G_n\left(b_{k(n)}^*\right)+\frac{\epsilon}{32},\label{eq:AP(3.2.1)}
\end{align}
for all $n\ge n_2'$, due to (\ref{eq:E-Wert}). Next, the moment inequality from Lemma \ref{moment inequality} will be applied to $G_n(b)$ for $b\in\cB_{k(n)}$. To do that, set for $b\in\cB_{k(n)}$ and $n\in\N$ fixed, 
\[Z_{m,t}:=b(X_{m,t})-E[b(X_{m,t})], \ \forall \ 1\le t\le m,m\in\N.\]

Then assumption \textbf{(ii)} of Lemma \ref{moment inequality} is satisfied, because for all $1\le t\le m,m\in\N$ and $i=2,\dots,Q$, it holds that $E[Z_{m,t}]=0$ and 
\begin{align*}
E\left[\left|Z_{m,t}\right|^{i\frac{2+\gamma}{2}}\right]
&=E\left[\left|b(X_{m,t})-E\left[b(X_{m,t})\right]\right|^{i\frac{2+\gamma}{2}}\right]\\
&\le 2^{i\frac{2+\gamma}{2}}E\left[\left|b(X_{m,t})\right|^{i\frac{2+\gamma}{2}}\right]\\
&\le 2^{Q\frac{2+\gamma}{2}}\sup\limits_{m\in\N}\sup\limits_{1\le t\le m}E\left[\left|b(X_{m,t})\right|^{i\frac{2+\gamma}{2}}\right]\\
&\le 2^{Q\frac{2+\gamma}{2}}\delta_{k(n)}^2\\
&=\left(2^{\frac{Q}{2}}\tau_{k(n)}\right)^{2+\gamma},
\end{align*}
due to (\ref{eq:partition2}). Assumption \textbf{(i)} of Lemma \ref{moment inequality} is also satisfied by \textbf{(A1)} for $\{X_{n,t}:1\le t\le n,n\in\N\}$ and inherited to $\{Z_{n,t}:1\le t\le n,n\in\N\}$. Applying Lemma \ref{moment inequality} to $Z_{n,t}$ and inequality (\ref{eq:tau_k}), it follows that for all $b\in\cB_{k(n)}$, there exists some constant $C$, only depending on $Q,\gamma$ and the mixing coefficient, such that
\begin{align}
E\left[\left|G_n(b)\right|^Q\right]^{\frac{1}{Q}}\le C \max\left(n^{-\frac{1}{2}},2^{\frac{Q}{2}}\tau_{k(n)}\right)=C' \tau_{k(n)}, \label{eq:Lemma auf b_k}
\end{align}
with $C':=C2^{\frac{Q}{2}}$. Finally using (\ref{eq:AP(3.2.1)}) and (\ref{eq:Lemma auf b_k}), it can be concluded that for all $n\ge n_2'$
\begin{align*}
E^{\ast}\left[\sup\limits_{\ph\in\cF}\left|G_n(\ph)-G_n\left(a_{k(n)}^*\right)\right|^Q\right]^{\frac{1}{Q}}
&\le E\left[\max\limits_{b\in\cB_{k(n)}}\left|G_n(b)\right|^Q\right]^{\frac{1}{Q}}+\frac{\ep}{32}\\
&\le N_{k(n)}^{\frac{1}{Q}}\max\limits_{b\in\cB_{k(n)}}E\left[\left|G_n(b)\right|^Q\right]^{\frac{1}{Q}}+\frac{\ep}{32}\\
&\le C' N_{k(n)}^{\frac{1}{Q}}\tau_{k(n)} +\frac{\ep}{32}\\
&= C' \left(N_{[~]}\left(\delta_{k(n)},\cF,\rho\right)\right)^{\frac{1}{Q}}\delta_{k(n)}^{\frac{2}{2+\gamma}}+\frac{\ep}{32}\\
&\le C' \int\limits_{0}^{\delta_{k(n)}}x^{-\frac{\gamma}{2+\gamma}}\left(N_{[~]}\left(x,\cF,\rho\right)\right)^{\frac{1}{Q}}dx+\frac{\ep}{32},
\end{align*}
where the last inequality uses the fact that $x\mapsto x^{-\frac{\gamma}{2+\gamma}}\left(N_{[~]}\left(x,\cF,\rho\right)\right)^{\frac{1}{Q}}$ is decreasing and the integral exists by assumption \textbf{(A2)}. As $\delta_{k(n)}\searrow 0$, there exists a $n_2''=n_2''(\epsilon)$, such that
\[C' \int\limits_{0}^{\delta_{k(n)}}x^{-\frac{\gamma}{2+\gamma}}\left(N_{[~]}\left(x,\cF,\rho\right)\right)^{\frac{1}{Q}}dx<\frac{\ep}{32},\]
for all $n\ge n_2''$. By choosing $n_2=\max(n_2',n_2'')$ the assertion in (\ref{eq:AP(3.2)}) follows. 

%
\textit{Proof of (\ref{eq:AP(3.3)}):} The aim is to choose an $m=m(\epsilon)$ fixed (dependent only on $\epsilon$ eventually) and for each $\ph\in\cF$ the corresponding approximating function $a_{k(n)}^*\in\cA_{k(n)}$. Then a chain from $a_{k(n)}:=a_{k(n)}^*\in\cA_{k(n)}$ to $a_m:=a_m^{(\ph)}\in\cA_m$ for all $k(n)>m$ is built. In what follows, the iterative choice of functions from one chain link $a_k$ to the next one $a_{k-1}$ will be illustrated. For an already chosen $a_k\in\cA_k$, choose $a_{k-1}\in\cA_{k-1}$, such that
\begin{align}
a_{k-1}\in\left\{a\in\cA_{k-1}:\max\limits_{2\le i\le Q}\sup\limits_{n\in\N}\sup\limits_{1\le t\le \N}E\left[\left|a_k(X_{n,t})-a(X_{n,t})\right|^{i\frac{2+\gamma}{2}}\right] \text{ is minimal }\right\}.\label{eq:minimizer}
\end{align}

Such an object exists as the considered term is bounded from below by zero. If there is more than one minimizer, then one of them is chosen randomly. By doing so,  while $\ph$ ranges over $\cF$, each difference $(a_k-a_{k-1})$ ranges over at most $N_k$ functions, because $a_k$ ranges over at most $|\cA_k|=N_k$ functions and $a_{k-1}$ is chosen according to the procedure above. Then it holds that
%
%
\begin{align}
E\left[\sup\limits_{\ph\in\cF}\left|G_n\left(a_k\right)-G_n\left(a_{k-1}\right)\right|^Q\right]^{\frac{1}{Q}}
&\le N_k^{\frac{1}{Q}}\sup\limits_{\ph\in\cF}E\left[\left|G_n\left(a_k\right)-G_n\left(a_{k-1}\right)\right|^Q\right]^{\frac{1}{Q}}\notag\\
&=N_k^{\frac{1}{Q}}\sup\limits_{\ph\in\cF}E\left[\left|G_n\left(a_k-a_{k-1}\right)\right|^Q\right]^{\frac{1}{Q}}.\label{eq:AP(3.3.1)}
\end{align}

Notice again that unlike the supremum suggests, the possible difference ranges over finitely many functions and therefore the inequality, used in (\ref{eq:AP(3.3.1)}), is valid and the outer expectation simplifies to the usual expectation. For the expected value of the last term, the moment inequality from Lemma \ref{moment inequality} will be used again. Defining 
\[Z_{n,t}:=a_k(X_{n,t})-a_{k-1}(X_{n,t})-E[a_k(X_{n,t})-a_{k-1}(X_{n,t})], \ \forall \ 1\le t\le n,n\in\N,\] 
it can be obtained that $E[Z_{n,t}]=0$ for all $1\le t\le n,n\in\N$. Furthermore, by assumption \textbf{(A2)}, for $a_k\in\cA_k\subset\cF$, there exist an $\tilde{a}_{k-1}^*\in\cA_{k-1}$ and a $\tilde{b}_{k-1}^*\in\cB_{k-1}$, such that
\[\left|a_k-\tilde{a}_{k-1}^*\right|\le \tilde{b}_{k-1}^* \text{ and } \sup\limits_{n\in\N}\sup\limits_{1\le t\le n}E\left[\left|b(X_{n,t})\right|^{i\frac{2+\gamma}{2}}\right]^{\frac{1}{2}}\le \delta_{k-1}, \ \forall \ i=2,\dots,Q, \ \forall \ b\in\cB_{k-1}.\]

Using (\ref{eq:minimizer}), it thus holds that
\begin{align*}
\max\limits_{2\le i\le Q}E\left[|Z_{n,t}|^{i\frac{2+\gamma}{2}}\right]
&=\max\limits_{2\le i\le Q}E\left[\left|a_k(X_{n,t})-a_{k-1}(X_{n,t})-E[a_k(X_{n,t})-a_{k-1}(X_{n,t})]\right|^{i\frac{2+\gamma}{2}}\right]\\
&\le 2^{Q\frac{2+\gamma}{2}}\max\limits_{2\le i\le Q}\sup\limits_{n\in\N}\sup\limits_{1\le t\le n}E\left[\left|a_k(X_{n,t})-a_{k-1}(X_{n,t})\right|^{i\frac{2+\gamma}{2}}\right]\\
&\le 2^{Q\frac{2+\gamma}{2}}\max\limits_{2\le i\le Q}\sup\limits_{n\in\N}\sup\limits_{1\le t\le n}E\left[\left|a_k(X_{n,t})-\tilde{a}_{k-1}^*(X_{n,t})\right|^{i\frac{2+\gamma}{2}}\right]\\
&\le 2^{Q\frac{2+\gamma}{2}}\max\limits_{2\le i\le Q}\sup\limits_{n\in\N}\sup\limits_{1\le t\le n}E\left[\left|\tilde{b}_{k-1}^*(X_{n,t})\right|^{i\frac{2+\gamma}{2}}\right]\\
&\le 2^{Q\frac{2+\gamma}{2}}\delta_{k-1}^2=\left(2^{\frac{Q}{2}}\tau_{k-1}\right)^{2+\gamma},
\end{align*}
and thus assumption \textbf{(ii)} of Lemma \ref{moment inequality} is satisfied. Assumption \textbf{(i)} is satisfied by \textbf{(A1)} for $\{X_{n,t}:1\le t\le n,n\in\N\}$ and is inherited to $\{Z_{n,t}:1\le t\le n,n\in\N\}$. Then applying Lemma \ref{moment inequality} and using $\tau_k\ge n^{-\frac{1}{2}}$ for all $1\le k\le k(n)$ by (\ref{eq:tau_k}), yield to 
\begin{align}
E\left[\left|G_n\left(a_k-a_{k-1}\right)\right|^Q\right]^{\frac{1}{Q}}\le C\tau_{k-1}, \label{eq:AP(3.3.2)}
\end{align}
for some constant $C$ only depending on $\gamma$, $Q$ and the mixing coefficient. Now all tools to build the bridge between $a_{k(n)}:=a_{k(n)}^*$ and $a_m:=a_m^{(\ph)}$ are obtained. Using (\ref{eq:AP(3.3.1)}) and (\ref{eq:AP(3.3.2)}), it holds that
\begin{align*}
E\left[\sup\limits_{\ph\in\cF}\left|G_n\left(a_{k(n)}^*\right)-G_n\left(a_{m}^{(\ph)}\right)\right|^Q\right]^{\frac{1}{Q}}
&=E\left[\sup\limits_{\ph\in\cF}\left|\sum\limits_{k=m+1}^{k(n)}\left(G_n\left(a_{k}\right)-G_n\left(a_{k-1}\right)\right)\right|^Q\right]^{\frac{1}{Q}}\\
&\le\sum\limits_{k=m+1}^{k(n)}E\left[\sup\limits_{\ph\in\cF}\left|G_n\left(a_{k}\right)-G_n\left(a_{k-1}\right)\right|^Q\right]^{\frac{1}{Q}}\\
&\le \sum\limits_{k=m+1}^{k(n)}N_k^{\frac{1}{Q}}\sup\limits_{\ph\in\cF}E\left[\left|G_n\left(a_k-a_{k-1}\right)\right|^Q\right]^{\frac{1}{Q}}\\
&\le C \sum\limits_{k=m+1}^{k(n)}N_k^{\frac{1}{Q}}\tau_{k-1}\\
&\le 2^{\frac{2}{2+\gamma}}C \sum\limits_{k=m+1}^{\infty} \left(N_{[~]}\left(\delta_k,\cF,\rho\right)\right)^{\frac{1}{Q}}\delta_k^{\frac{2}{2+\gamma}} \\
&=2^{\frac{2}{2+\gamma}+1}C \sum\limits_{k=m+1}^{\infty}\delta_k^{-\frac{\gamma}{2+\gamma}} \left(N_{[~]}\left(\delta_k,\cF,\rho\right)\right)^{\frac{1}{Q}}(\delta_k-\delta_{k-1})\\
&\le 2^{\frac{2}{2+\gamma}}C \int\limits_{0}^{\delta_m} x^{-\frac{\gamma}{2+\gamma}}\left(N_{[~]}\left(x,\cF,\rho\right)\right)^{\frac{1}{Q}}dx,
\end{align*}
for all $k(n)>m$. The last equality holds because $\delta_k-\delta_{k-1}=2^{-1}\delta_k$. The last inequality again holds as $x\mapsto x^{-\frac{\gamma}{2+\gamma}}\left(N_{[~]}\left(x,\cF,\rho\right)\right)^{\frac{1}{Q}}$ is decreasing and the integral exists due to assumption \textbf{(A2)}. Furthermore, $\delta_m\searrow 0$ for $m\to\infty$. Hence, for a given $\epsilon>0$, $m=m(\epsilon)$ and $n_3=n_3(\epsilon)$ can be chosen large enough, such that 
\begin{align*}
E\left[\sup\limits_{\ph\in\cF}\left|G_n\left(a_{k(n)}^*\right)-G_n\left(a_{m}^{(\ph)}\right)\right|^Q\right]^{\frac{1}{Q}}
<\frac{\epsilon}{16}
\end{align*}
for all $n\ge n_3$ with $k(n)>m$, which proves inequality (\ref{eq:AP(3.3)}).

%
\textit{Step 2:} In the second and last step of the proof, the comparison of infinitely many functions in $\cF$ will be reduced to finitely many functions, making use of inequality (\ref{eq:AP(3.4)}). To do that, let $m\in\N$ be the integer fixed in step one and refer with $a_m^{(\ph)}$ to the element in $\cA_m$, that is chosen dependent on $\ph\in\cF$, according to the procedure in step one. Let the following relation on $\cF$ (dependent on $m$) be introduced
\[\ph\sim_{m} \psi \Leftrightarrow a_m^{(\ph)}=a_m^{(\psi)}.\]

This relation is obviously an equivalence relation and, as $|\cA_m|=N_m$, partitions $\cF$ into $N_m$ many equivalence classes, denoted by 
\[\cE^{(m)}[1],\dots,\cE^{(m)}[N_m].\]

Each class thus contains all $\ph$ in $\cF$, that have the same $a_m^{(\ph)}$ in $\cA_m$, that has been chosen in step one. Within one equivalence class, inequality (\ref{eq:AP(3.4)}) can be applied twice, leading to
\begin{align}
&E^{\ast}\left[\sup\limits_{\{\ph,\psi\in\cF| \ph\sim_{m} \psi\}}\left|G_n(\ph)-G_n(\psi)\right|^Q\right]^{\frac{1}{Q}}\notag\\
&=E^{\ast}\left[\sup\limits_{\{\ph,\psi\in\cF|  \ph\sim_{m} \psi\}}\left|\left(G_n(\ph)-G_n(a_m^{(\ph)})\right)-\left(G_n(\psi)-G_n(a_m^{(\psi)})\right)\right|^Q\right]^{\frac{1}{Q}}\notag\\
&\le 2E^{\ast}\left[\sup\limits_{\ph\in\cF}\left|G_n(\ph)-G_n(a_m^{(\ph)})\right|^Q\right]^{\frac{1}{Q}}\notag\\
&< \frac{\epsilon}{4},\label{eq:AP(3.6)}
\end{align}
for all $n\ge n_1$. To bridge the gap between the $N_m$ classes, let
\[d(\cE^{(m)}[k],\cE^{(m)}[j]):=\inf\big\{d(\ph,\psi):\ph\in\cE^{(m)}[k],\psi\in\cE^{(m)}[j]\big\}\]
define a distance between two classes $\cE^{(m)}[k]$ and $\cE^{(m)}[j]$ for $k,j\in\{1,\dots,N_m\}$. For some fixed $\delta>0$, that will be specified later, and fixed $k,j\in\{1,\dots,N_m\}$, choose functions $\ph_{kj}'\in\cE^{(m)}[k]$ and $\psi_{jk}'\in\cE^{(m)}[j]$, such that 
\[d(\ph_{kj}',\psi_{jk}')<d(\cE^{(m)}[k],\cE^{(m)}[j])+\delta.\] 

Note that for $\ph\in\cE^{(m)}[k]$ and $\psi\in\cE^{(m)}[j]$ with $d(\ph,\psi)<\delta$, it holds that $d(\ph_{kj}',\psi_{jk}')<2\delta$ for all $k,j\in\{1,\dots,N_m\}$. Then applying (\ref{eq:AP(3.6)}) for all $n\ge n_1$, it can be obtained that
%
%
%
\begin{align*}
&E^{\ast}\left[\sup\limits_{\{\ph,\psi\in\cF:d(\ph,\psi)<\delta\}}|G_n(\ph)-G_n(\psi)|^Q\right]^{\frac{1}{Q}}\\
&=E^{\ast}\left[\max\limits_{\substack{1\le k\le N_m\\1\le j\le N_m}}\sup\limits_{\substack{\{\ph\in\cE^{(m)}[k],\psi\in\cE^{(m)}[j]:\\d(\ph,\psi)<\delta\}}}|G_n(\ph)-G_n(\psi)|^Q\right]^{\frac{1}{Q}}\\
&=E^{\ast}\left[\max\limits_{\substack{1\le k\le N_m\\1\le j\le N_m}}\sup\limits_{\substack{\{\ph\in\cE^{(m)}[k],\psi\in\cE^{(m)}[j]:\\d(\ph,\psi)<\delta\}}}|G_n(\ph)-G_n(\psi)\pm G_n(\ph'_{kj})\pm G_n(\psi'_{jk})|^Q\right]^{\frac{1}{Q}}\\
&\le 2 E^{\ast}\left[\max\limits_{1\le k\le N_m}\sup\limits_{\{\ph,\ph'\in\cE^{(m)}[k]\}}|G_n(\ph)-G_n(\ph')|^{Q}\right]^{\frac{1}{Q}}
+E\left[\max\limits_{\substack{1\le k\le N_m\\1\le j\le N_m}}|G_n(\ph'_{kj})- G_n(\psi'_{jk})|^Q\right]^{\frac{1}{Q}}\\
&<\frac{\epsilon}{2}+E\left[\max\limits_{\substack{1\le k\le N_m\\1\le j\le N_m}}|G_n(\ph'_{kj})- G_n(\psi'_{jk})|^Q\right]^{\frac{1}{Q}}\\
&\le \frac{\epsilon}{2}+N_m^{\frac{2}{Q}}\max\limits_{\substack{1\le k\le N_m\\1\le j\le N_m}}E\left[|G_n(\ph'_{kj}-\psi'_{jk})|^Q\right]^{\frac{1}{Q}},
\end{align*}
where $d(\ph'_{kj},\psi'_{jk})<2\delta$ holds for all $k,j\in\{1,\dots,N_m\}$. To find a bound on $E\left[|G_n(\ph'_{kj}-\psi'_{jk})|^Q\right]^{\frac{1}{Q}}$, the moment inequality of Lemma \ref{moment inequality} will be used. Let therefore $k,j\in\{1,\dots,N_m\}$ be fixed and let $\ph_{kj}'\in\cE^{(m)}[k]$ and $\psi_{jk}'\in\cE^{(m)}[j]$ with 
\begin{align}
d(\ph_{kj}',\psi_{jk}')=\sup\limits_{n\in \N}\sup\limits_{1\le t\le\N}E\left[\left|\ph_{kj}'(X_{n,t})-\psi_{jk}'(X_{n,t})\right|^{Q\frac{2+\gamma}{2}}\right]^{\frac{1}{Q}\frac{2}{2+\gamma}}< 2\delta \label{eq:d(ph',psi')}
\end{align}
and set $Z_{n,t}:=\ph_{kj}'(X_{n,t})-\psi_{jk}'(X_{n,t})-E[\ph_{kj}'(X_{n,t})-\psi_{jk}'(X_{n,t})]$ for all $1\le t\le n$ and $n\in\N$. Then, assumption \textbf{(ii)} of Lemma \ref{moment inequality} is satisfied as for all $1\le t\le n,n\in\N$ and $i=2,\dots,Q$ it holds that $E[Z_{n,t}]=0$ and
\begin{align*}
&E\left[\left|\ph_{kj}'(X_{n,t})-\psi_{jk}'(X_{n,t})-E[\ph_{kj}'(X_{n,t})-\psi_{jk}'(X_{n,t})]\right|^{i\frac{2+\gamma}{2}}\right]\\
&\le 2^{Q\frac{2+\gamma}{2}}E\left[|\ph_{kj}'(X_{n,t})-\psi_{jk}'(X_{n,t})|^{Q\frac{2+\gamma}{2}}\right]^{\frac{i}{Q}}\\
&\le 2^{Q\frac{2+\gamma}{2}}d(\ph_{kj}',\psi_{jk}')^{i\frac{2+\gamma}{2}}\\
&\le 2^{Q\frac{2+\gamma}{2}}(2\delta)^{i\frac{2+\gamma}{2}}\\
&\le 2^{Q\frac{2+\gamma}{2}}(2\delta)^{2+\gamma}, \ \text{ for } \delta\le\frac{1}{2}\\
&=\left(2^{\frac{Q}{2}+1}\delta\right)^{2+\gamma},
\end{align*}
due to (\ref{eq:d(ph',psi')}). Assumption \textbf{(i)} is satisfied by \textbf{(A1)} for $\{X_{n,t}:1\le t\le n,n\in\N\}$ and is inherited to $\{Z_{n,t}:1\le t\le n,n\in\N\}$. Applying Lemma \ref{moment inequality} yields to
\begin{align*}
E\left[|G_n(\ph_{kj}'-\psi_{jk}')|^Q\right]^{\frac{1}{Q}}\le C\max\left(n^{-\frac{1}{2}},2^{\frac{Q}{2}+1}\delta\right),
\end{align*}
for some constant $C$ only depending on $\gamma$, $Q$ and the mixing coefficient and for $\delta\le\frac{1}{2}$. Therefore, it holds that
\begin{align*}
E^{\ast}\left[\sup\limits_{\{\ph,\psi\in\cF:d(\ph,\psi)<\delta\}}\left|G_n(\ph)-G_n(\psi)\right|^Q\right]^{\frac{1}{Q}}
&< \frac{\epsilon}{2}+N_m^{\frac{2}{Q}}\max\limits_{\substack{1\le k\le N_m\\1\le j\le N_m}}E\left[|G_n(\ph_{kj}'-\psi_{jk}')|^Q\right]^{\frac{1}{Q}}\\
&<\frac{\epsilon}{2}+N_m^{\frac{2}{Q}}C\max\left(n^{-\frac{1}{2}},2^{\frac{Q}{2}+1}\delta\right)
\end{align*}
for all $n\ge n_1$ and for $\delta\le\frac{1}{2}$. Choose $\delta=\delta(\epsilon)$ small enough, such that
\[N_m^{\frac{2}{Q}}C2^{\frac{Q}{2}+1}\delta<\frac{\epsilon}{2} \text{ and } \delta\le\frac{1}{2}\]
and for this fixed $\delta$, let $n_4=n_4(\epsilon)$, such that
\[\max\left(n^{-\frac{1}{2}},2^{\frac{Q}{2}+1}\delta\right)=2^{\frac{Q}{2}+1}\delta,\]
for all $n\ge n_4$. By finally choosing $n_0:=\max(n_1,n_4)$, the assertion in (\ref{eq:AP}) is proven.

\end{proof}

\begin{proof}[Proof of Corollary \ref{cor1}]
This is a direct consequence of Theorem \ref{conv} and Markov's inequality.

\end{proof}

\begin{proof}[Proof of Corollary \ref{cor2}]
To prove Corollary \ref{cor2}, Theorem 4.10 by Volgushev \& Shao \cite{Volgushev2014390} will be applied. Note that it particularly requires a strictly stationary sequence of random variables. Applying Theorem \ref{conv}, it follows that there exists a semi-metric $d$ on $\cF$, such that $(\cF,d)$ is totally bounded and there exists a $Q>2$, such that
\[\lim\limits_{\delta\searrow 0}\limsup\limits_{n\to\infty}E^{\ast}\left[\sup\limits_{\{\ph,\psi\in\cF:d(\ph,\psi)<\delta\}}\left|G_n(1,\ph-\psi)\right|^Q\right]=0,\]
which is condition (9) of Theorem 4.10 of Volgushev \& Shao \cite{Volgushev2014390}. Furthermore, condition (10) of Theorem 4.10 in \cite{Volgushev2014390}, namely
\[\sup\limits_{n\in\N}\sup\limits_{\ph\in\cF}E\left[|G_n(1,\ph)|^Q\right]<\infty\]
is also satisfied. To see this, define $Z_t:=\ph(X_t)-E[\ph(X_t)]$ for all $t\in\Z$. Applying \textbf{(A3)}, it then holds that $E[Z_1]=0$ and for all $i=2,\dots,Q$
\begin{align*}
E\left[|Z_1|^{i\frac{2+\gamma}{2}}\right]
&=E\left[\left|\ph(X_1)-\int \ph dP\right|^{i\frac{2+\gamma}{2}}\right]\\
&\le 2^{Q\frac{2+\gamma}{2}}E\left[|\ph(X_1)|^{i\frac{2+\gamma}{2}}\right]\\
&\le 2^{Q\frac{2+\gamma}{2}}E\left[|\ph(X_1)|^{Q\frac{2+\gamma}{2}}\right]^{\frac{i}{Q}}\\
&\le 2^{Q\frac{2+\gamma}{2}}\max\left(M^{\frac{2}{Q}},M\right)\\
&=:\tau^{2+\gamma},
\end{align*}
for $M<\infty$ from assumption \textbf{(A3)}. Applying Lemma \ref{moment inequality}, it holds that for all $n\in\N$ and $\ph\in\cF$
\begin{align}
E\left[|G_n(1,\ph)|^Q\right]^{\frac{1}{Q}}\le C \max \left(n^{-\frac{1}{2}},\tau\right), \label{eq:cor2:(10)}
\end{align}
for some constant $C$, only depending on $Q$, $\gamma$ and the mixing coefficient. As the inequality in \textbf{(A3)} holds uniformly in $\ph\in\cF$, the constant $\tau<\infty$ does not depend on $\ph$. Therefore (\ref{eq:cor2:(10)}) implies condition (10) of Theorem 4.10 in \cite{Volgushev2014390}.

By assumption \textbf{(A3)} the function class $\cF$ possesses an envelope function with finite $Q$-th moment. Furthermore, all finite dimensional distributions converge by assumption. Applying Theorem 4.10 of Volgushev \& Shao \cite{Volgushev2014390}, the assertion of Corollary \ref{cor2} follows.

\end{proof}

\section*{Acknowledgements} 

I am deeply thankful to my doctoral advisor, Prof.~Dr.~Natalie Neumeyer, for a careful reading of this manuscript and helpful suggestions. I also want to thank Prof.~Dr.~Stanislav Volgushev for useful literature suggestions.

\bibliography{mybibfile}

\end{document}